\providecommand{\U}[1]{\protect \rule{.1in}{.1in}}
\theoremstyle{change}
\newtheorem{definition}{Definition:}[section]
\newtheorem{proposition}[definition]{Proposition:}
\newtheorem{theorem}[definition]{Theorem:}
\newtheorem{lemma}[definition]{Lemma:}
	\newtheorem{remark}[definition]{Remark:}
	\newtheorem{example}[definition]{Example:}
\newenvironment{proof}
{{\bf Proof:}}
{\qquad \hspace*{\fill} $\Box$}
\newcommand{\fg}{\mathfrak{g}}
\newcommand{\fr}{\mathfrak{r}}
\newcommand{\fn}{\mathfrak{n}}
\newcommand{\fh}{\mathfrak{h}}
\newcommand{\fu}{\mathfrak{u}}
\newcommand{\Ad}{\operatorname{Ad}}
\newcommand{\ad}{\operatorname{ad}}
\newcommand{\id}{\operatorname{id}}
\newcommand{\inner}{\operatorname{int}}
\newcommand{\cl}{\operatorname{cl}}
\newcommand{\rme}{\mathrm{e}}
\newcommand{\CC}{\mathcal{C}}
\newcommand{\OC}{\mathcal{O}}
\newcommand{\UC}{\mathcal{U}}
\newcommand{\XC}{\mathcal{X}}
\newcommand{\DC}{\mathcal{D}}
\newcommand{\C}{\mathbb{C}}
\newcommand{\N}{\mathbb{N}}
\newcommand{\R}{\mathbb{R}}
\newcommand{\Z}{\mathbb{Z}}
\begin{document}

\title{Central periodic points of linear systems}
\author{V\'{\i}ctor Ayala \thanks{%
		Supported by Proyecto Fondecyt $n^{o}$ 1190142, Conicyt, Chile} \\
	Universidad de Tarapac\'a\\
	Instituto de Alta Investigaci\'on\\
	Casilla 7D, Arica, Chile\\
	and\\
	Adriano Da Silva \thanks{%
		Supported by Fapesp grant n%
		${{}^o}$
		2018/10696-6.}\\
	Instituto de Matem\'atica,\\
	Universidade Estadual de Campinas\\
	Cx. Postal 6065, 13.081-970 Campinas-SP, Brasil.\\
}
\date{\today }
\maketitle

\begin{abstract}
	In this paper, we introduce the concept of central periodic points of a linear system as points which lies on orbits starting and ending at the central subgroup of the system. We show that this set is bounded if and only if the central subgroup is compact. Moreover, if the system admits a control set containing the identity element of $G$ then, the set of central periodic points, coincides with its interior. 
\end{abstract}

	\textbf{Key words:} linear systems, periodic points, control sets
	
	\textbf{2010 Mathematics Subject Classification: 93B05, 93C05}

\section{Introduction}

Essentially, a linear system on a connected Lie group is an
affine control system whose drift is linear and the control vectors left-invariant ones. Its importance is highlighted by at least two facts: Firstly, it appears as a natural generalization of the classical linear systems on Euclidean spaces. One of the properties that this generalization inherited from the Euclidean case is the possibility to associate subgroups closely connected with its dynamics (see \cite{ADS, DSAyGZ, DS}), called stable, central, and unstable subgroups. Secondly, in \cite{JPh1} Jouan proves the Equivalence Theorem which assures that any affine control system on a connected manifold, whose vector fields are complete and generate a finite dimensional Lie algebra is diffeomorphic to a linear system on a Lie group, or on a homogeneous spaces, showing that linear systems are also relevant for classiffication of general affine control systems on abstract connected manifolds.

On the other hand, like singularities, periodic orbits are essential to understand the dynamics of vector fields. Dynamical systems may have stable limit sets determined by fixed points or periodic orbits, defining the domain of attraction on the manifold, i.e., points from which the trajectories will converge to the corresponding limit set as time goes to infinity. Therefore, to understand the dynamic behavior of a linear system we introduce the
notion of $F$-periodic point of the system as follows: Given a nonempty subset
$F$ of $G$ we say that a point is $F$-periodic if it belongs to a trajectory of the system starting and 
finishing in $F$. The central periodic points are then the $G^0$-periodic points, where $G^0$ stands for the central subgroup associated with the linear system.  Our main result shows that compactness of $G^0$ is a necessary and suficient condition for the boundedness of the central periodic points. As a consequence, the control set containing the identity element of $G$ is
bounded if and only if $G^{0}$ is a compact subgroup. 

The paper is structured as follows: In Section 2 we introduce the concept of linear vector fields and the decompositions induced by them on the group and algebra level. We also introduce the concept of a linear system and its $F$-periodic points, and prove some complementary results. In Section 3 we analyze a particular case of a linear system on a semi-direct product of a connected Lie group with a nilpotent, simply connected, connected Lie group. This particular case is the key to proving our main result and is also essential by itself. Actually, the results in Section 3 gives a way to decompose in coordinates, linear systems on simply connected nilpotent Lie groups. Section 4 contains the proof of the main result of the paper. In this section, we also introduce the concept of control sets and show that the central periodic points of a linear system coincide with the interior of the control set containing the identity of the group. We finish the section with some examples.

\subsection*{Notations}

Let $G$ be a connected Lie group with Lie algebra $\fg$. By $\exp:\fg\rightarrow G$ we denote the exponential map of $G$. For any element $g\in G$ we denote by $L_g$ and $R_g$ the left and right translations of $G$ by $g$, respectively. The conjugation $C_g$ is by definition $C_g=R_{g^{-1}}\circ L_g$.  We denote by $\mathrm{Aut}(G)$ and by $\mathrm{Aut}(\fg)$ the set of automorphisms of $G$ and $\fg$, respectively. The adjoint map $\Ad:G\rightarrow \mathrm{Aut}(\fg)$ is the map defined by $\Ad(g):=(dC_g)_e$, where $e\in G$ stands for the identity element of $G$. If $H$ is a connected Lie group and $\rho:G\rightarrow\mathrm{Aut}(H)$ is a homomorphism, the semi-direct product of $G$ and $H$ is the Lie group $G\times_{\rho} H$ whose subjacent manifold is $G\times H$ and the product is given by 
$$(g_1, h_1)(g_2, h_2):=(g_1g_2, h_1\rho(g_1)h_2).$$
Its Lie algebra coincides, as a vector space, to the Cartesian product $\fg\times\fh$. For any $X, Y\in\fg$, the {\it Baker-Campbell-Hausdorff (BCH)} formula is given by
$$\exp(X)\exp(Y)=\exp(c(X, Y)),$$
where $c(X, Y)$ is a series depending on $X, Y$ and its brackets. Its first terms are given by
$$c(X, Y)=X+Y+\frac{1}{2}[X, Y]+\frac{1}{12}\left([X, [X, Y]]+[Y, [Y, X]]\right)-\frac{1}{24}[Y, [X, [X, Y]]]+\cdots,$$
where the subsequent ones depend on the brackets of five or more elements. The serie is convergent for $X$ and $Y$ small enough and it is finite when $G$ is a nilpotent Lie group. Moreover, in the nilpotent case we can endow $\fg$ with the product $(X, Y)\in \fg\times\fg\rightarrow X*Y=c(X, Y)$ such that $(\fg, *)$ is the simple connected, connected nilpotent Lie group with Lie algebra $\fg$.  

\section{Preliminaries}

This section is devoted to present the main background needed to establish the main theorem. We also prove some new results that will be useful ahead.

\subsection{Decompositions at the algebra level}

Let $\DC$ be a derivation of $\fg$ and $\alpha\in\C$ an eigenvalue of $\DC$. The real generalized eigenspaces of $\DC$ are given by  
$$
\mathfrak{g}_{\alpha}=\{X\in \mathfrak{g}:(\mathcal{D}-\alpha I)^{n}X=0\;\;\mbox{for some }n\geq 1\}, \;\;\mbox{ if }\;\;\alpha\in\R \;\;\mbox{ and}
$$
$$\mathfrak{g}_{\alpha}=\mathrm{span}\{\mathrm{Re}(v), \mathrm{Im}(v);\;\;v\in \bar{\fg}_{\alpha}\},\;\;\mbox{ if }\;\;\alpha\in\C$$
where $\bar{\fg}=\fg+i\fg$ is the complexification of $\fg$ and $\bar{\fg}_{\alpha}$ the generalized eigenspace of $\bar{\DC}=\DC+i\DC$, the extension of $\DC$ to $\bar{\fg}$.

Following \cite[Proposition 3.1]{SM1} it turns out that $[\bar{\fg}_{\alpha },\bar{\fg}_{\beta }]\subset 
\bar{\fg}_{\alpha +\beta }$ when $\alpha +\beta $ is an eigenvalue of $\mathcal{D}$ and zero otherwise. By considering in $\fg$ the subspaces $\fg_{\lambda}:=\bigoplus_{\alpha; \mathrm{Re}(\alpha)=\lambda}\fg_{\alpha}$, where $\fg_{\lambda}=\{0\}$ if $\lambda\in\R$ is not the real part of any eigenvalue of $\DC$, we get 
$$[\fg_{\lambda_1}, \fg_{\lambda_2}]\subset \fg_{\lambda_1+\lambda_2}\;\;\;\mbox{ when }\lambda_1+\lambda_2=\mathrm{Re}(\alpha)\;\mbox{ for some eigenvalue }\alpha\;\mbox{ of }\;\DC\;\mbox{ and zero otherwise}.$$

We define the {\it unstable, central }and {\it stable} subalgebras of $\fg$, respectively, by
\begin{equation*}
\mathfrak{g}^{+}=\bigoplus_{\alpha :\, \mathrm{Re}(\alpha)>	0}\mathfrak{g}_{\alpha },\hspace{1cm}\mathfrak{g}^{0}=\bigoplus_{\alpha :\,%
	\mathrm{Re}(\alpha )=0}\mathfrak{g}_{\alpha }\hspace{1cm}%
\mbox{ and }\hspace{1cm}\mathfrak{g}^{-}=\bigoplus_{\alpha :\, \mathrm{Re}%
	(\alpha )<0}\mathfrak{g}_{\alpha }.
\end{equation*}
It holds that $\mathfrak{g}^{+},\mathfrak{g}^{0}$ and $\mathfrak{g}^{-}$ are in fact $\DC$-invariant Lie subalgebras with $\mathfrak{g}^{+}$, $\mathfrak{g}^{-}$ nilpotent ones. Moreover, it $\fg$ decomposes as the direct sum $\mathfrak{g}=\mathfrak{g}^{+}\oplus \mathfrak{g}^{0}\oplus \mathfrak{g}^{-}$.

\subsection{Decompositions at the group level}

Let $G$ be a connected Lie group with Lie algebra $\mathfrak{g}$ identified with the set of left-invariant vector fields on $G$. A vector field $\mathcal{X}$ on $G$ is said to be \emph{linear} if for any $Y\in\fg$ it holds that 
$$[\XC, Y]\in \fg\;\;\;\mbox{ and }\;\;\;\XC(e)=0.$$
As showed in \cite[Theorem 1]{JPh1}, a linear vector field $\XC$ is complete and its associated flow $\{\varphi_t\}_{t\in \R}$ is a $1$-parameter subgroup of $\mathrm{Aut}(G)$ satisfying
\begin{equation*}
\forall g\in G, t\in\R\;\;\;\;\; \frac{d}{dt}\varphi_t(g)=\XC(\varphi_t(g))\;\;\;\mbox{ and }\;\;\;  (d\varphi_{t})_{e}=\mathrm{e}^{t\mathcal{D}}, \label{derivativeonorigin}
\end{equation*}
where $\DC:\fg\rightarrow\fg$ is the derivation defined by $\DC(Y):=[\XC, Y]$, and $\rme^{t\DC}$ its matrix exponential.

Let us denote by $G^{+}$, $G^{-}$, $G^{0}$, $G^{+,0},$ and $G^{-,0}$ the
connected Lie subgroups of $G$ with Lie algebras given by $\mathfrak{g}^{+}$, $%
\mathfrak{g}^{-}$, $\mathfrak{g}^{0}$, $\mathfrak{g}^{+,0}:=\mathfrak{g}%
^{+}\oplus \mathfrak{g}^{0}$ and $\mathfrak{g}^{-,0}:=\mathfrak{g}^{-}\oplus 
\mathfrak{g}^{0}$, respectively. By Proposition 2.9 of \cite{DS}, all the above subgroups are $\varphi$-invariant, closed and have trivial intersection, that is, 
$$G^+\cap G^-=G^+\cap G^{-, 0}=\ldots=\{e\}.$$
The subgroups $G^+, G^0$ and $G^-$ are called the unstable, central, and stable subgroups of $\XC$, respectively. We also, use the notation $G^{+, -}$ for the product of $G^+$ and $G^-$, that is, $G^{+, -}=G^+G^-$.

We say that $G$ is {\it decomposable} if 
$$G=G^{+, 0}G^-=G^{-, 0}G^+=G^{+, -}G^0.$$ 
By \cite[Proposition 3.3]{DSAyGZ} a sufficient condition for a group $G$ to be decomposable is the compactness of the central subgroup $G^0$. Moreover, on decomposable groups, any element can be uniquely decomposed into the product of factors in the stable, central, and unstable subgroups.

The following result explores some more properties coming from the assumption on the compactness of $G^0$.

\begin{lemma}
	\label{lemma1}
	Let $\fn$ to be the nilradical of $\fg$. If $G^0$ is a compact subgroup, then
	\begin{itemize}
		\item[1.] $G^+, G^-\subset N:=\exp(\fn)$;
		\item[2.] $N\cap G^0$ is a compact, connected and normal subgroup of $G$;
	\end{itemize}
\end{lemma}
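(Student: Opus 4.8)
The plan is to work first at the Lie algebra level and reduce everything to the solvable radical $\fr$ of $\fg$, using the compactness of $G^0$ in an essential way. Write $\DC=\DC_s+\DC_n$ for the Jordan decomposition of the derivation $\DC$ and split its semisimple part over $\R$ as $\DC_s=H+E$, where $H$ is the hyperbolic part (real eigenvalues) and $E$ the elliptic part (purely imaginary eigenvalues); both are again derivations of $\fg$. By construction the eigenvalues of $H$ are exactly the real parts of the eigenvalues of $\DC$, so $\fg^{0}=\ker H$ while $\fg^{+}$ and $\fg^{-}$ are the sums of the positive and negative eigenspaces of $H$. Since $\fr$ is a characteristic ideal it is $H$-invariant, and $H$ induces on the semisimple quotient $\fg/\fr$ a semisimple derivation $\bar H$, necessarily inner, say $\bar H=\ad(\bar Z)$. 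A short computation with $H$-eigenspaces gives $\ker\bar H=\pi(\ker H)=\pi(\fg^{0})$, where $\pi:\fg\to\fg/\fr$ is the projection; as $\bar H(\bar Z)=0$ one may pick a lift $Z_{0}\in\fg^{0}$ of $\bar Z$.

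This is where compactness enters, and it is the heart of the argument. The one-parameter group $\exp(tZ_{0})$ lies in $G^{0}$, hence is relatively compact, so $\Ad(\exp tZ_{0})$ is a bounded family in $\mathrm{GL}(\fg)$; on $\fg/\fr$ it acts as $\exp(t\bar H)$, whose eigenvalues are $\mathrm{e}^{t\lambda}$ with $\lambda$ the real eigenvalues of $\bar H$. Boundedness forces $\bar H=0$, i.e. $H(\fg)\subseteq\fr$, and evaluating on an $H$-eigenvector with nonzero eigenvalue yields $\fg^{+},\fg^{-}\subseteq\fr$. It remains to upgrade this to the nilradical: for $X\in\fg^{+}$ the operator $\ad(X)$ strictly raises the $H$-grading, hence is nilpotent (and symmetrically for $\fg^{-}$). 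Since $\fn=\mathrm{nilrad}(\fr)$ is exactly the set of $\ad$-nilpotent elements of the radical, we get $\fg^{+},\fg^{-}\subseteq\fn$, whence $G^{+},G^{-}\subseteq N$, proving item~1. I expect this compactness step, ruling out a hyperbolic direction inside $\fg^{0}$, to be the main obstacle; without it the conclusion is false already for $\ad(H)$ on $\mathfrak{sl}(2,\R)$, where the corresponding $G^{0}$ is noncompact.

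For item~2 I would first identify $\fn\cap\fg^{0}$. Every $X$ in the nilradical satisfies $\ad_{\fg}(X)(\fg)\subseteq\fn$ with $\ad_{\fg}(X)|_{\fn}$ nilpotent, so $\ad_{\fg}(X)$ is nilpotent; on the other hand, for $X\in\fg^{0}$ compactness of $G^{0}$ makes $\Ad(\exp tX)$ bounded, so $\ad_{\fg}(X)$ is semisimple with purely imaginary spectrum. An operator that is simultaneously nilpotent and semisimple vanishes, so $\fn\cap\fg^{0}\subseteq\fz(\fg)$; combined with the obvious $\fz(\fg)\subseteq\fn$ this gives $\fn\cap\fg^{0}=\fz(\fg)\cap\fg^{0}$, an abelian subalgebra lying in the centre of $\fg$.

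At the group level the same dichotomy yields normality directly. If $g\in N\cap G^{0}$ then $\Ad(g)$ is unipotent (being trivial on $\fg/\fn$ and unipotent on $\fn$, as $N$ is connected nilpotent) and at the same time semisimple with unit-modulus eigenvalues (because $\Ad(G^{0})$ is compact); hence $\Ad(g)=\id$ and $g\in Z(G)$. Thus $N\cap G^{0}\subseteq Z(G)$ is central, in particular normal in $G$. For connectedness I would use that compactness of $G^{0}$ makes $G$ decomposable, so the central projection $p_{0}:G\to G^{0}$ coming from the unique factorization $g=g^{+}g^{0}g^{-}$ is continuous; since $G^{+},G^{-}\subseteq N$ by item~1 and $N$ is a subgroup, one checks $p_{0}(N)=N\cap G^{0}$, exhibiting $N\cap G^{0}$ as the continuous image of the connected group $N$, hence connected. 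Finally, being a connected subgroup of the compact group $G^{0}$ with Lie algebra $\fz(\fg)\cap\fg^{0}$, it coincides with the closed torus $\exp(\fz(\fg)\cap\fg^{0})$ and is therefore compact.
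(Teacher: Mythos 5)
Your argument is correct in substance but takes a genuinely different, more self-contained route than the paper's. For item~1 the paper outsources both key steps: it projects to the semisimple quotient $G/R$ and quotes \cite[Proposition 3.3]{DSAyGZ} to get $G/R=(G/R)^0$, hence $G^+,G^-\subset R$, and then quotes \cite[Lemma 2.1]{DSAy1} for $\fg^{\pm}\subset\fn$. You reprove both ingredients from scratch: your boundedness argument (relative compactness of $\Ad(\exp tZ_0)$ forcing the induced hyperbolic derivation $\bar H=\ad(\bar Z)$ on $\fg/\fr$ to vanish) is precisely the infinitesimal version of the quoted semisimple fact, and your grading argument (elements of $\fg^{\pm}$ are $\ad$-nilpotent, and the nilradical is exactly the set of $\ad$-nilpotent elements of $\fr$) reproves the cited lemma. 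For item~2 your route is cleaner and yields strictly more: the paper only shows $N\cap G^0=N^0\subset Z(N)$ and then checks normality in $G$ by a conjugation computation using decomposability and the characteristic invariance of the nilradical, whereas your unipotent-versus-semisimple dichotomy for $\Ad(g)$ gives the stronger conclusion $N\cap G^0\subset Z(G)$, from which normality is immediate. Your connectedness argument via the central projection $p_0$ with $p_0(N)=N\cap G^0$ is also a legitimate alternative to the paper's identification $N\cap G^0=N^0$ (which obtains connectedness and compactness simultaneously by applying \cite[Proposition 2.9]{DS} to the system restricted to $N$); note only that the continuity of $p_0$ deserves a word --- it follows because the product map $G^+\times G^0\times G^-\to G$ is a continuous bijection between manifolds of equal dimension, hence a homeomorphism by invariance of domain.

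The one step you must repair is the final compactness claim. The inference ``a connected subgroup of the compact group $G^0$ with Lie algebra $\fz(\fg)\cap\fg^0$ coincides with the closed torus $\exp(\fz(\fg)\cap\fg^0)$'' is not valid in general: a connected, even central and abelian, subgroup of a compact group need not be closed --- an irrational one-parameter winding in $\T^2$ is the standard counterexample. Your conclusion is nevertheless true, and there are two quick patches. Easiest: the nilradical subgroup $N$ of a connected Lie group is closed, and $G^0$ is closed, so $N\cap G^0$ is a closed subgroup of the compact group $G^0$, hence compact. Alternatively, staying inside your own argument: since you have shown $N\cap G^0\subset Z(G)\cap G^0$, and $Z(G)=\ker\Ad$ and $G^0$ are closed, the closure $T:=\cl(N\cap G^0)$ is a compact connected abelian group, i.e.\ a torus, whose Lie algebra $\mathfrak{t}$ satisfies $\mathfrak{t}\subseteq\fz(\fg)\cap\fg^0$; hence $T=\exp(\mathfrak{t})\subseteq\exp\bigl(\fz(\fg)\cap\fg^0\bigr)\subseteq N\cap G^0\subseteq T$, so $N\cap G^0$ is already closed and compact. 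With either patch the proof is complete.
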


\begin{proof}
	1. Let us denote by $R$ the solvable radical of $G$. Under the assumption that $G^0$ is compact, we get that $\left(G/R\right)^0=\pi(G^0)$ is also compact, where $\pi:G\rightarrow G/R$ is the canonical projection. Since $G/R$ is semi-simple, Proposition 3.3 of \cite{DSAyGZ} implies that $G/R=\left(G/R\right)^0$ and consequently that $G=G^0R$. In particular, $G^+, G^-\subset R$. On the other hand, Lemma 2.1 of \cite{DSAy1} assures that the nilradical $\fn$ of $\fr$ contains $\fg_{\alpha}$ for any nonzero eigenvalue $\alpha$ of $\DC$. Therefore, $\fg^+, \fg^-\subset\fn$ and hence $G^+, G^-\subset N$.
	
	2. To prove the second claim, let us notice that $N^0\subset N\cap G^0$ implying in particular that $N^0$ is a compact subgroup, and therefore $N$ is decomposable. By the uniqueness of the decomposition of each element in $N$, we must have that $G^0\cap N\subset N^0$ and hence $N^0=N\cap G^0$, showing that $N\cap G^0$ is a compact and connected Lie subgroup of $N$. 
	
	On the other hand, it is a standard fact that compact subgroups of nilpotent Lie groups are always central and hence $N^0\subset Z(N)$ (see, for instance \cite[Theorem 1.6]{ALEB}). Therefore, the fact that $G$ is decomposable implies by the previous item that $N\cap G^0$ is a normal subgroup of $G$ if and only if it is normalized by $G^0$. However, the fact that the nilradical is invariant by automorphisms, implies that $C_g(N)=N$ for any $g\in G$. In particular, if $g\in G^0$ and $h\in N\cap G^0$ we get that 
	$$G^0\ni ghg^{-1}=C_g(h)\in N\implies ghg^{-1}\in N\cap G^0,$$
	concluding the proof. 
\end{proof}

\bigskip

The next lemma shows that, in the decomposable case, if $G^{+, -}$ is a subgroup, then $G$ can be seen as a semi-direct product. 

\begin{lemma}
	\label{conj}
	If $G$ is decomposable and $G^{+, -}$ is a subgroup then $G$ is isomorphic to the semi-direct product $G^0\times_{\Ad}\fg^{+, -}$.
\end{lemma}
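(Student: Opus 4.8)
The goal is to produce an isomorphism $\Phi:G^0\times_{\Ad}\fg^{+,-}\to G$, so first I would make sense of the target group. Since $G^{+,-}$ is a connected subgroup, its Lie algebra is $\fg^{+,-}=\fg^+\oplus\fg^-$, which is therefore a subalgebra of $\fg$. The restriction $\DC|_{\fg^{+,-}}$ is a derivation of $\fg^{+,-}$ all of whose eigenvalues have nonzero real part, hence it is nonsingular; by the classical result that a finite-dimensional Lie algebra admitting a nonsingular derivation is nilpotent, $\fg^{+,-}$ is nilpotent, and so $(\fg^{+,-},*)$ is the simply connected nilpotent Lie group with Lie algebra $\fg^{+,-}$. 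Moreover, from $[\fg^0,\fg^+]\subset\fg^+$ and $[\fg^0,\fg^-]\subset\fg^-$ we get $[\fg^0,\fg^{+,-}]\subset\fg^{+,-}$, so $\Ad(a)$ preserves $\fg^{+,-}$ for every $a\in G^0$ and restricts to an automorphism of $(\fg^{+,-},*)$. Thus $\Ad:G^0\to\mathrm{Aut}(\fg^{+,-},*)$ is a homomorphism and the semi-direct product $G^0\times_{\Ad}\fg^{+,-}$ is well defined.

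Next I would define $\Phi(a,X)=\exp(X)\,a$, where $\exp:\fg^{+,-}\to G^{+,-}$ is the restriction of the exponential of $G$. To check that $\Phi$ is a homomorphism, given $(a_1,X_1),(a_2,X_2)$ I compute
\begin{align*}
\Phi(a_1,X_1)\,\Phi(a_2,X_2) &= \exp(X_1)\,a_1\,\exp(X_2)\,a_2 = \exp(X_1)\,\bigl(a_1\exp(X_2)a_1^{-1}\bigr)\,a_1a_2\\
&= \exp(X_1)\,\exp\!\bigl(\Ad(a_1)X_2\bigr)\,a_1a_2 = \exp\!\bigl(X_1*\Ad(a_1)X_2\bigr)\,a_1a_2,
\end{align*}
where the third equality uses $a\exp(Y)a^{-1}=\exp(\Ad(a)Y)$ together with $\Ad(a_1)X_2\in\fg^{+,-}$, and the last one is the identity $\exp(Y)\exp(Z)=\exp(Y*Z)$ valid in the nilpotent subgroup $G^{+,-}$. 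The right-hand side equals $\Phi\bigl(a_1a_2,\;X_1*\Ad(a_1)X_2\bigr)$, which is precisely $\Phi$ applied to the product $(a_1,X_1)(a_2,X_2)$ in $G^0\times_{\Ad}\fg^{+,-}$ as given by the semi-direct product law; hence $\Phi$ is a homomorphism, and it is clearly smooth.

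Finally I would show that $\Phi$ is a bijection. Surjectivity is the decomposability hypothesis $G=G^{+,-}G^0$, once I know $\exp:\fg^{+,-}\to G^{+,-}$ is onto. Injectivity reduces to the uniqueness of the decomposition $g=wa$ with $w\in G^{+,-}$, $a\in G^0$ (equivalently $G^{+,-}\cap G^0=\{e\}$, which the decomposability of $G$ provides) together with the injectivity of $\exp:\fg^{+,-}\to G^{+,-}$. Since a bijective smooth homomorphism of Lie groups is an isomorphism, this would conclude the proof.

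The main obstacle is the claim that $\exp:\fg^{+,-}\to G^{+,-}$ is a diffeomorphism, i.e. that $G^{+,-}$ is simply connected; this is exactly what upgrades the statement from $G\cong G^0\ltimes G^{+,-}$ to $G\cong G^0\times_{\Ad}\fg^{+,-}$. I would deduce it from the simple connectedness of the stable and unstable subgroups $G^+$ and $G^-$ established in \cite{DS}: since $G^+\cap G^-=\{e\}$ and $\dim G^++\dim G^-=\dim G^{+,-}$, the multiplication map $G^+\times G^-\to G^{+,-}$ is a diffeomorphism, whence $G^{+,-}$ is simply connected and its exponential is a global diffeomorphism onto it.
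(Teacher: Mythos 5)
Your proof is correct, and its skeleton coincides with the paper's: the same map $(g,X)\mapsto\exp(X)g$, the same BCH/$\Ad$-computation showing it is a homomorphism, surjectivity from decomposability plus surjectivity of $\exp$ on the connected nilpotent group $G^{+,-}$, and the same reduction of injectivity to $G^{+,-}\cap G^0=\{e\}$ together with injectivity of $\exp:\fg^{+,-}\to G^{+,-}$. Where you genuinely diverge is in that last, crucial step. The paper stays self-contained and dynamical: since $\fg^{+,-}$ is nilpotent, $\exp:\fg^{+,-}\to G^{+,-}$ is a covering map, so $\exp^{-1}(e)$ is discrete; it is invariant under the one-parameter group $\rme^{t\DC}$, and a discrete set invariant under a connected flow is pointwise fixed, forcing $\exp^{-1}(e)\subset\ker\DC\cap\fg^{+,-}=\{0\}$ by hyperbolicity of $\DC|_{\fg^{+,-}}$. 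You instead import from \cite{DS} the simple connectedness of $G^{+}$ and $G^{-}$ and transfer it to $G^{+,-}$ through the multiplication map $m:G^{+}\times G^{-}\to G^{+,-}$. Note that hyperbolicity of $\DC|_{\fg^{+,-}}$ is the common engine of both arguments --- you already invoke it for nilpotency via Jacobson's theorem, and the paper reuses it once more for injectivity; your route trades that reuse for external structure theory, and as a by-product yields the global product decomposition $G^{+,-}\cong G^{+}\times G^{-}$, which the paper never needs.

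One small point to tighten: ``injective smooth map between manifolds of equal dimension'' does not by itself give a diffeomorphism (think of $x\mapsto x^{3}$ on $\R$). Either note that invariance of domain already makes $m$ a homeomorphism, which is all that simple connectedness requires, or check that $m$ is an immersion: its differential at $(w,v)$ sends $(X,Y)\in\fg^{+}\oplus\fg^{-}$ to $dL_{w}dR_{v}(X+Y)$, injective because $\fg^{+}\cap\fg^{-}=\{0\}$. Similarly, your identification of the Lie algebra of $G^{+,-}$ with $\fg^{+}\oplus\fg^{-}$ deserves a word: it contains $\fg^{+}+\fg^{-}$, and since $G^{+,-}=m(G^{+}\times G^{-})$ its dimension is at most $\dim\fg^{+}+\dim\fg^{-}$, which pins it down (the paper tacitly assumes the same fact when calling $\fg^{+,-}$ a nilpotent Lie algebra, so this is a shared, minor omission rather than a gap in your argument).
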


\begin{proof}
	Define the map
	$$\psi: G^0\times_{\Ad}\fg^{+, -}\rightarrow G,\;\;\;\;(g, x)\in G^0\times\fg^{+, -}\mapsto \exp(X) g\in G.$$
	If $p:G\times G\rightarrow G$ stands for the product in $G$ we have that 
	$$\psi=p\circ\left(\exp\times \id_G\right)|_{\fg^{+, -}\times G^0},$$
	and therefore $\psi$ is a continuous map. Furthermore, since $G$ is decomposable and $G^{+, -}$ is a nilpotent group, it holds that 
	$$G=G^{+, -}G^0=\exp(\fg^{+, -})G^0$$
	and so $\psi$ is surjective. Moreover, if $(g_1, X_1), (g_2, X_2)\in G^0\times\fg^{+, -}$, then
	$$\psi\left((g_1, X_1)(g_2, X_2)\right)=\psi(g_1g_2, X_1*\Ad(g_1)X_2)$$
	$$=\exp(X_1*\Ad(g_1)X_2)g_1g_2=\exp(X_1)\exp(\Ad(g_1)X_2)g_1g_2=\exp(X_1)g_1\exp(X_2)g_2=\psi(g_1, X_1)\psi(g_2, X_2),$$
	showing that $\psi$ is in fact a homomorphism.
	
	On the other hand, since $G^{+, -}\cap G^0=\{e\}$ it follows that
	$$(g, X)\in\ker\psi \;\;\iff \;\;\exp(X)g=e\iff G^{+, -}\ni\exp(X)=g^{-1}\in G^0\;\;\iff \;\;\exp(X)=g=e,$$
	and consequently 
	$$\psi\;\;\mbox{ is injective }\;\; \iff\;\; \ker\psi=\{(e, 0)\}\;\;\iff\;\; \exp:\fg^{+, -}\rightarrow G^{+, -} \;\;\mbox{ is injective.}$$
	However, since $\fg^{+, -}$ is a nilpotent Lie algebra, it holds that the exponential map $\exp:\fg^{+, -}\rightarrow G^{+, -}$ is a covering map, implying that $\exp^{-1}(e)\subset\fg^{+, -}$ is a discrete subset. On the other hand, $$\rme^{t\DC}(\exp^{-1}(e))\subset \exp^{-1}(e), \;\;\;\mbox{ for all }\;\;\;t\in\R,$$
	thus $\exp^{-1}(e)\subset\ker\DC\cap\fg^{+, -}=\{0\}$, which shows that $\exp$ is injective and consequently that $\psi$ is an isomorphism.
\end{proof}

\subsection{Linear systems}

Let $G$ be a connected Lie group with Lie algebra $\fg$ identified with the set of left-invariant vector fields on $G$ and $\Omega\subset\R^m$ a compact and convex subset containing the origin in its interior. The {\it set of the control functions} is by definition
$$\UC:=\{u:\R\rightarrow\R^m; \;u\;\mbox{ is a piecewise constant function with }  u(\R)\subset\Omega\}.$$
A \emph{linear system} on $G$ is given by the family of ordinary differential equations 
\begin{flalign*}
&&\dot{g}(t)=\XC(g(t))+\sum_{j=1}^mu_j(t)Y^j(g(t)),  &&\hspace{-1cm}\left(\Sigma_G\right)
\end{flalign*}
where the \emph{drift} $\XC$ is a linear vector field, $Y^1, \ldots, Y^m\in\fg$ and $u=(u_1, \ldots, u_m)\in\UC$. For any $g\in G$ and $u\in\UC$, the solution $t\mapsto\phi(t, g, u)$ of $\Sigma_G$ is complete and satisfies  
\begin{equation}
\label{prop}
\phi_{\tau, u}\circ L_g=L_{\varphi_{\tau}(g)}\circ\phi_{\tau, u}, \;\;\mbox{ for any }\;\;\tau\in\R, g\in G.
\end{equation}

The unstable, central and stable subgroups of $\Sigma_G$ are the ones induced by the linear drift of the system. As showed in \cite{DSAy1, DS} these subgroups strongly influences the dynamical behavior of the system.

\begin{definition}
	Let $F\subset G$ be a nonempty subset. We say that $g\in G$ is a {\it $F$-periodic point} of $\Sigma_G$ if there exist $f_1, f_2\in F$, $\tau_1, \tau_2>0$ and $u_1, u_2\in\UC$ such that 
	\begin{equation}
	\label{per}
	\phi(\tau_1, f_1, u_1)=g\;\;\;\;\mbox{ and }\;\;\;\;\phi(\tau_2, g, u_2)=f_2.
	\end{equation}
	A central periodic point is, by definition, a $G^0$-periodic point. We denote by $\mathrm{Per}(F; \Sigma_G)$ and by $\mathrm{Per}(\Sigma_G)$ the subsets of the $F$-periodic points and the central periodic points of $\Sigma_G$, respectively.
\end{definition}

\begin{remark}
	If $F$ is $\varphi$-invariant, then $F\subset \mathrm{Per}(F; \Sigma_G)$. In fact, if $g\in F$ we have by the $\varphi$-invariance of $F$ that
	$$f_1=\varphi_{-\tau}(g)\in F\;\;\mbox{ and }\;\;f_2=\varphi_{\tau}(g)\in F.$$
	By considering $u_1=u_2\equiv 0$ and $\tau_1=\tau_2=\tau$, we get
	$$\phi(\tau_1, f_1, u_1)=\varphi_{\tau}(f_1)=g\;\;\;\mbox{ and }\;\;\;\phi(\tau_2, g, u)=\varphi_{\tau}(g)=f_2,$$
	showing that $g$ is $F$-periodic.
\end{remark}

Next, we show that the whole curve connecting a point $g\in \mathrm{Per}(F, \Sigma_G)$ to $F$ is contained in $\mathrm{Per}(F, \Sigma_G)$.

\begin{lemma}
	\label{equi}
	For any $g\in G$, it holds that $g\in\mathrm{Per}(F; \Sigma_G)$ if and only if there exist $f\in F$, $\tau>0$ and $u\in \UC$ such that
	\begin{equation}
	\label{per2}
	g\in\{\phi(t, f, u), \;t\in(0, \tau)\}\;\;\;\mbox{ and }\;\;\;\phi(\tau, f, u)\in F.
	\end{equation}
	Hence, if $\phi(\tau, f, u)\in F$ for some $f\in F$, $\tau>0$ and $u\in\UC$, then $\phi(t, f, u)\in\mathrm{Per}(F; \Sigma_G)$ for any $t\in(0, \tau)$.
\end{lemma}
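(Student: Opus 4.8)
The plan is to prove the equivalence by exhibiting explicit concatenations and splittings of controls, relying on the concatenation (cocycle) property of the solutions of $\Sigma_G$. Recall that for piecewise constant controls the solution satisfies
$$\phi(t+s, g, u) = \phi\big(s, \phi(t, g, u), \Theta_t u\big), \qquad \Theta_t u(\cdot) := u(\cdot + t),$$
and that $\UC$ is closed both under the time-shifts $u\mapsto\Theta_t u$ and under concatenation of two control functions, since piecewise constancy and the constraint $u(\R)\subset\Omega$ are preserved by both operations. These two closure facts, together with the cocycle property, are the only structural inputs needed.

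For the implication from the defining condition \eqref{per} to condition \eqref{per2}, I would start from $f_1, f_2\in F$, $\tau_1, \tau_2>0$ and $u_1, u_2\in\UC$ with $\phi(\tau_1, f_1, u_1)=g$ and $\phi(\tau_2, g, u_2)=f_2$. I would set $\tau:=\tau_1+\tau_2$, $f:=f_1$, and let $u\in\UC$ be the concatenation equal to $u_1$ on $[0,\tau_1)$ and to $\Theta_{-\tau_1}u_2$ on $[\tau_1,\tau)$. Since $u$ agrees with $u_1$ on $[0,\tau_1)$ we get $g=\phi(\tau_1, f, u)$ with $\tau_1\in(0,\tau)$; and by the cocycle property $\phi(\tau, f, u)=\phi(\tau_2, g, u_2)=f_2\in F$. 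This is precisely \eqref{per2}, with the witnessing intermediate time $t=\tau_1$.

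For the converse, suppose \eqref{per2} holds: $g=\phi(t_0, f, u)$ for some $t_0\in(0,\tau)$ and $\phi(\tau, f, u)\in F$. I would take $f_1:=f$, $\tau_1:=t_0>0$, $u_1:=u$, giving $\phi(\tau_1, f_1, u_1)=g$; and $f_2:=\phi(\tau, f, u)\in F$, $\tau_2:=\tau-t_0>0$, $u_2:=\Theta_{t_0}u\in\UC$, for which the cocycle property yields $\phi(\tau_2, g, u_2)=\phi\big(\tau-t_0, \phi(t_0, f, u), \Theta_{t_0}u\big)=\phi(\tau, f, u)=f_2$. Hence $g$ is $F$-periodic, which completes the equivalence. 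The final assertion is then immediate: if $\phi(\tau, f, u)\in F$ for some $f\in F$, $\tau>0$ and $u\in\UC$, then for every $t\in(0,\tau)$ the same triple $(f,\tau,u)$ witnesses condition \eqref{per2} for the point $\phi(t, f, u)$, so each such point lies in $\mathrm{Per}(F;\Sigma_G)$.

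Since both directions reduce to bookkeeping with the concatenation property, I do not expect a serious obstacle. The only points requiring care are verifying that the concatenated and shifted controls genuinely belong to $\UC$, and checking that all the time indices $\tau_1,\tau_2,t_0,\tau-t_0$ are strictly positive, which is guaranteed by $\tau_1,\tau_2>0$ in one direction and by $t_0\in(0,\tau)$ in the other.
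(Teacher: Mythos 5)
Your proof is correct and follows essentially the same route as the paper's: the forward direction by concatenating $u_1$ with the shifted $u_2$ at time $\tau_1$, and the converse by splitting $u$ at the intermediate time via the shift $\theta_{t_0}u$, exactly as in the paper. The only cosmetic difference is that you define the concatenated control on $[0,\tau)$ rather than on all of $\R$ as the paper does, which is immaterial since any extension within $\Omega$ yields the same trajectory on the relevant interval.
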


\begin{proof}
	Let $g\in \mathrm{Per}(F; \Sigma_G)$ and consider $f_1, f_2\in F$, $\tau_1, \tau_2>0$ and $u_1, u_2\in\UC$ satisfying (\ref{per}). The function
	$$
	u(t):=\left\{\begin{array}{cc}
	u_1(t), &  t\in (-\infty, \tau_1)\\
	u_2(t-\tau_1) &  t\in [\tau_1, +\infty)
	\end{array}\right.\;\;\mbox{ belongs to }\;\;\UC,$$
	and it holds that 
	$$\phi(\tau_1, f_1, u)=\phi(\tau_1, f_1, u_1)=g,\;\;\;\mbox{ and }\;\;\;\phi(\tau_1+\tau_2, f_1, u)=\phi(\tau_2, \phi(\tau_1, f_1, u_1), \theta_{\tau_1}u)=\phi(\tau_2, g, u_2)=f_2\in F,$$
	showing that relation (\ref{per2}) holds. Reciprocally, let $f\in F$, $\tau>0$ and $u\in\UC$ such that relation (\ref{per2}) is satisfied. Let $\tau_1\in(0, \tau)$ and $f_2\in F$ such that 
	$$g=\phi(\tau_1, f, u)\;\;\;\;\mbox{ and }\;\;\;\;f_2=\phi(\tau, f, u)$$
	and set $f_1=f$, $\tau_2=\tau-\tau_1>0$, $u_1=u$ and $u_2=\theta_{\tau_1}u$. Then 
	$$\phi(\tau_1, f_1, u_1)=\phi(\tau_1, f, u)=g,$$
	and
	$$\phi(\tau_2, g, u_2)=\phi(\tau_2, \phi(\tau_1, f, u), \theta_{\tau_1}u)=\phi(\tau_2+\tau_1, f, u)=\phi(\tau, f, u)=f_2,$$
	showing that $g$ is $F$-periodic, and concluding the proof.
\end{proof}

\begin{remark}
In the particular case where $F=\{g\}$, the previous lemma shows that the set $\mathrm{Per}(g; \Sigma_G)$ consists of closed orbits passing by $g\in G$. 
\end{remark}

Let $G$ and $H$ connected Lie groups. We say that two linear systems $\Sigma_G$ and $\Sigma_H$, respectively on $G$ and $H$, are {\it conjugated} if there exists a surjective homomorphism
 $\psi:G\rightarrow H$ such that
\begin{equation}
\label{solu}
\psi\left(\phi^G(t, g, u)\right)=\phi^H(t, \psi(g), u)), \;\;\;\mbox{ for any }\;\;g\in G, \,t\in\R\;\mbox{ and }u\in\UC.
\end{equation}
The map $\psi$ is said to be a {\it conjugation} between $\Sigma_G$ and $\Sigma_H$. 

\begin{proposition}
	\label{control}
Let $\psi$ be a conjugation between $\Sigma_G$ and $\Sigma_H$. If $F\subset G$ is a nonempty subset satisfying $\ker\psi\cdot F=F$, then  
 $$\psi\left(\mathrm{Per}(F; \Sigma_G)\right)=\mathrm{Per}(\psi(F); \Sigma_{H})\;\;\;\;\mbox{ and }\;\;\;\;\psi^{-1}\left(\mathrm{Per}(\psi(F); \Sigma_{H})\right)=\mathrm{Per}(F; \Sigma_G).$$ 
 In particular, if $\ker\psi$ is a compact subgroup, then $\mathrm{Per}(F; \Sigma_G)$ is a bounded subset of $G$ if and only if $\mathrm{Per}(\psi(F); \Sigma_{H})$ is a bounded subset of $H$.
\end{proposition}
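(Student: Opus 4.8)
The plan is to reduce both set equalities to two elementary inclusions and then upgrade them using surjectivity of $\psi$. Two structural observations drive everything. First, the hypothesis $\ker\psi\cdot F=F$ is equivalent to $\psi^{-1}(\psi(F))=F$: indeed $x\in\psi^{-1}(\psi(F))$ means $\psi(x)=\psi(f)$ for some $f\in F$, i.e. $xf^{-1}\in\ker\psi$, i.e. $x\in\ker\psi\cdot F$. Second, $\ker\psi$ is $\varphi$-invariant: taking $u\equiv 0$ in the conjugation identity (\ref{solu}) gives $\psi(\varphi_t(x))=\varphi^H_t(\psi(x))$, where $\varphi$ and $\varphi^H$ denote the flows of the linear drifts of $\Sigma_G$ and $\Sigma_H$; hence $\psi(x)=e$ implies $\psi(\varphi_t(x))=\varphi^H_t(e)=e$, so that $\varphi_t(\ker\psi)=\ker\psi$ for every $t\in\R$.

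First I would prove the easy inclusion $\psi(\mathrm{Per}(F;\Sigma_G))\subseteq\mathrm{Per}(\psi(F);\Sigma_H)$. If $g$ is $F$-periodic with data $f_1,f_2\in F$, $\tau_1,\tau_2>0$ and $u_1,u_2\in\UC$ as in (\ref{per}), then applying $\psi$ together with (\ref{solu}) yields $\psi(g)=\phi^H(\tau_1,\psi(f_1),u_1)$ and $\phi^H(\tau_2,\psi(g),u_2)=\psi(f_2)$, with $\psi(f_1),\psi(f_2)\in\psi(F)$; thus $\psi(g)$ is $\psi(F)$-periodic.

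The core of the argument is the reverse inclusion $\psi^{-1}(\mathrm{Per}(\psi(F);\Sigma_H))\subseteq\mathrm{Per}(F;\Sigma_G)$. Let $g\in G$ be such that $\psi(g)$ is $\psi(F)$-periodic, say $\phi^H(\tau_1,\bar f_1,u_1)=\psi(g)$ and $\phi^H(\tau_2,\psi(g),u_2)=\bar f_2$ with $\bar f_1,\bar f_2\in\psi(F)$. The terminal condition is immediate: from (\ref{solu}), $\psi(\phi^G(\tau_2,g,u_2))=\phi^H(\tau_2,\psi(g),u_2)=\bar f_2\in\psi(F)$, so $\phi^G(\tau_2,g,u_2)\in\psi^{-1}(\psi(F))=F$. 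The delicate point, and the \emph{main obstacle}, is the initial condition: choosing any $f_1'\in F$ with $\psi(f_1')=\bar f_1$ and setting $g':=\phi^G(\tau_1,f_1',u_1)$ gives only $\psi(g')=\psi(g)$, not $g'=g$. To correct this I would exploit the left-translation property (\ref{prop}). Put $n:=g(g')^{-1}\in\ker\psi$ and $h:=\varphi_{-\tau_1}(n)$; the $\varphi$-invariance of $\ker\psi$ gives $h\in\ker\psi$, hence $f_1:=h f_1'\in\ker\psi\cdot F=F$, and by (\ref{prop}),
$$\phi^G(\tau_1,f_1,u_1)=\varphi_{\tau_1}(h)\,\phi^G(\tau_1,f_1',u_1)=\varphi_{\tau_1}(\varphi_{-\tau_1}(n))\,g'=n\,g'=g.$$
Together with the terminal condition, this exhibits $g$ as $F$-periodic.

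Finally I would assemble the equalities. Applying $\psi$ to the inclusion of the previous paragraph and using $\psi(\psi^{-1}(S))=S$ for surjective $\psi$ gives $\mathrm{Per}(\psi(F);\Sigma_H)\subseteq\psi(\mathrm{Per}(F;\Sigma_G))$, which together with the easy inclusion yields the first equality; and since the easy inclusion says $\psi(g)\in\mathrm{Per}(\psi(F);\Sigma_H)$ whenever $g\in\mathrm{Per}(F;\Sigma_G)$, we also get $\mathrm{Per}(F;\Sigma_G)\subseteq\psi^{-1}(\mathrm{Per}(\psi(F);\Sigma_H))$, completing the second. For the boundedness statement, interpret bounded as relatively compact (as holds for a complete left-invariant metric). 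Compactness of $\ker\psi$ makes $\psi$ a proper map, being the quotient by a compact normal subgroup, so preimages of compact sets are compact. If $\mathrm{Per}(\psi(F);\Sigma_H)$ is bounded, then $\mathrm{Per}(F;\Sigma_G)=\psi^{-1}(\mathrm{Per}(\psi(F);\Sigma_H))\subseteq\psi^{-1}(\overline{\mathrm{Per}(\psi(F);\Sigma_H)})$, which is compact; conversely, if $\mathrm{Per}(F;\Sigma_G)$ is bounded, then by continuity $\mathrm{Per}(\psi(F);\Sigma_H)=\psi(\mathrm{Per}(F;\Sigma_G))$ is contained in the compact set $\psi(\overline{\mathrm{Per}(F;\Sigma_G)})$. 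This gives the claimed equivalence.
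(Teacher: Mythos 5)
Your proof is correct and follows essentially the same route as the paper's: your key step --- replacing $f_1'$ by $f_1:=\varphi_{-\tau_1}(n)f_1'$ with $n\in\ker\psi$, using the $\varphi$-invariance of $\ker\psi$ obtained from (\ref{solu}) with $u\equiv 0$ together with the translation property (\ref{prop}) --- is exactly the paper's construction $\tilde{f}_1:=\varphi^G_{-\tau_1}(h_1)f_1$, and your boundedness argument via properness of $\psi$ is the same underlying fact the paper uses when it lifts the bounded set to a compact $K$ and bounds the preimage by $K\ker\psi$. The only cosmetic differences are that you check the easy inclusion directly from definition (\ref{per}) rather than through Lemma \ref{equi}, and you handle the terminal endpoint via the equivalence $\ker\psi\cdot F=F\iff\psi^{-1}(\psi(F))=F$ instead of multiplying $f_2$ by a kernel element as the paper does.
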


\begin{proof} To prove the first statement, we remark that due to the relationship between the involved sets it is enough to prove that 
	$$\psi\left(\mathrm{Per}(F; \Sigma_G)\right)\subset\mathrm{Per}(\psi(F); \Sigma_{H})\;\;\;\;\mbox{ and }\;\;\;\;\psi^{-1}\left(\mathrm{Per}(\psi(F); \Sigma_{H})\right)\subset\mathrm{Per}(F; \Sigma_G).$$ 
	
	If $g\in \mathrm{Per}(F; \Sigma_G)$ it follows from (\ref{per2}) that
	$$g\in\{\phi^G(t, f, u), \;t\in(0, \tau)\}\;\;\;\mbox{ and }\;\;\;\phi^G(\tau, f, u)\in F,$$
	for some $f\in F$, $\tau>0$ and $u\in\UC$.
	By equation (\ref{solu}) we get 
	$$\psi(g)\in\psi\left(\{\phi^G(t, f, u), \;t\in(0, \tau)\}\right)=\{\psi\left(\phi^G(t, f, u)\right), \;t\in(0, \tau)\}=\{\phi^H(t, \psi(f), u), \;t\in(0, \tau)\}$$
and 
	$$\psi(\phi^G(\tau, f, u))\in\psi(F)\iff \phi^H(\tau, \psi(f), u)\in \psi(F),$$
	showing that 
	$$\psi\left(\mathrm{Per}(F; \Sigma_G)\right)\subset\mathrm{Per}(\psi(F); \Sigma_{H}).$$ 
	
	Reciprocally, if $g\in\psi^{-1}\left(\mathrm{Per}(\psi(F); \Sigma_{H})\right)$ there exist $f_1, f_2\in F$, $\tau_1, \tau_2>0$ and $u_1, u_2\in\UC$ satisfying
	$$\phi^H(\tau_1, \psi(f_1), u_1)=\psi(g)\;\;\;\;\mbox{ and }\;\;\;\;\phi^H(\tau_2, \psi(g), u_2)=\psi(f_2),$$
	and hence,
	$$h_1\phi^G(\tau_1, f_1, u_1)=g\;\;\;\;\mbox{ and }\;\;\;\;\phi^G(\tau_2, g, u_2)=h_2f_2, \;\;\mbox{ for some }\;\;h_1, h_2\in \ker\psi.$$
	Using equation (\ref{solu}) for the control $u\equiv 0$ gives us that 
	$$\psi\circ\varphi_t^G=\varphi^H\circ\psi, \;\;\;\forall\; t\in\R\;\;\;\;\mbox{ and hence }\;\;\;\;\varphi_t^G(\ker\psi)\subset\ker\psi, \;\;\;\forall\; t\in\R.$$
	By defining $\tilde{f}_1:=\varphi^G_{-\tau_1}(h_1)f_1$ and $\tilde{f}_2:=h_2f_2$ the assumption that $\ker\psi\cdot F=F$ implies that $\tilde{f}_1, \tilde{f}_2\in F$. Moreover, by equation (\ref{prop})
	$$\phi^G(\tau_1, \tilde{f}_1, u_1)=\phi^G(\tau_1, f_1\varphi^G_{-\tau_1}(h_1), u_1)=h_1\phi^G(\tau_1, f_1, u_1)=g$$
	and
	$$\phi^G(\tau_2, g, u_2)=h_2f_2=\tilde{f}_2,$$
	showing that
	$$\psi^{-1}\left(\mathrm{Per}(\psi(F); \Sigma_{H})\right)\subset\mathrm{Per}(F; \Sigma_G),$$ 
	as stated.
	
	For the last assertion, it certainly holds that $\mathrm{Per}(\psi(F); \Sigma_{H})$ is bounded as soon as $\mathrm{Per}(F; \Sigma_G)$ is bounded. Reciprocally, if $\mathrm{Per}(\psi(F); \Sigma_{H})$ is bounded, there exists a compact set $K\subset G$ such that $\mathrm{Per}(\psi(F); \Sigma_{H})\subset\psi(K)$ which, by the previous equalities, implies that
	$$\mathrm{Per}(F; \Sigma_G)=\psi^{-1}\left(\mathrm{Per}(\psi(F); \Sigma_{H})\right)\subset\psi^{-1}(\psi(K))=K\ker\psi.$$
	Consequently $\mathrm{Per}(F; \Sigma_G)$ is a bounded subset if $\ker\psi$ is a compact subgroup of $G$.
\end{proof}

\section{A particularly important case}

In this section we analyze the solutions of a linear system on a semi-direct product of Lie groups. This analysis will be crucial in reducing the hypothesis of our main result.

Let $\fu$ be a nilpotent Lie algebra and identify it with the connected, simply connected Lie group $(\fu, *)$, where the product is given by the BCH formula. This identification between algebra and group allows us to work indistinctly with their elements. We will however use capital letters $X, Y, Z, \ldots$ for the elements in $\fu$ seen as Lie algebra and small letters $x, y, z, \ldots$ for the members in $\fu$ seen as Lie group. By the previous identifications, it turns out that a linear vector field $\XC$ on $\fu$ coincides with its associated derivation. In fact, since in this case $\{\varphi_t\}_{t\in\R}$ is one-parameter subgroup of automorphisms in $\mathrm{Aut}(\fu)$, there exists a derivation $\DC\in\mathrm{Lie}(\mathrm{Aut}(\fu))=\mathrm{Der}(\fu)$ such that $\varphi_t=\rme^{t\DC}$ for any $t\in\R$. Consequently, 
$$\XC(x)=\frac{d}{ds}_{|s=0}\varphi_s(x)=\frac{d}{ds}_{|s=0}\rme^{s\DC}x=\DC x$$

\bigskip

Let $H$ be a connected Lie group with Lie algebra $\fh$, $\rho:H\rightarrow\mathrm{Aut}(\fu)$ a continuous homomorphism and consider the semi-direct product $H\times_{\rho}\fu$. Since $T_{(e, 0)}(H\times_{\rho}\fu)=\fh\times\fu$, for any given $(Y, Z)\in\fh\times\fu$ the associated left-invariant vector field $(Y, Z)$ on $H\times_{\rho}\fu$ is, by definition, $(Y, Z)(h, x)=(dL_{(h, x)})_{(e, 0)}\alpha'(0)$, where $\alpha:(-\varepsilon, \varepsilon)\rightarrow H\times_{\rho}\fu$ is any differentiable curve satisfying $\alpha(0)=(e, 0)$ and $\alpha'(0)=(Y, Z)$. The curve  $\alpha(s):=(\exp(sY), sZ)$ satisfies the previous conditions and 
$$L_{(h, x)}(\alpha(s)))=(h, x)(\exp(sY), sZ)=\left(h\exp(sY), x*s\rho(h)Z\right).$$
Through the associated BCH formula, we obtain  
$$x*s\rho(h)Z=x+s\rho(h)Z+\frac{1}{2}[x, s\rho(h)Z]+\frac{1}{12}\left([x, [x, s\rho(h)Z]]+[s\rho(h)Z, [s\rho(h)Z, x]]\right)+\cdots,$$
and by differentiating at $s=0$, we get
$$
(Y, Z)(h, x)=\left(Y(h), (\rho(h)Z)(x)\right), \;\mbox{ where }\; (\rho(h)Z)(x):=\sum_{p=0}^{k-1}c_p\ad(x)^p\rho(h)Z.
$$
Here $k\in\N$ is the smallest natural number such that $\ad(x)^k\equiv 0$ for all $x\in \fu$, which exists by the fact that $\fu$ is nilpotent, and the coeficients $c_p$ are the ones given by the BCH formula. For instance, $c_0=1$, $c_1=-1/2$, $c_2=1/12$, and so on. 

If $\XC$ is a linear vector field on $G$ and $\DC$ is a derivation of $\fu$, the map
$$(h, x)\in H\times_{\rho} \fu\mapsto (\XC(h), \DC x)\in T_{(h, x)}(H\times_{\rho}),$$
is a linear vector field on $H\times_{\rho}\fu$.
Consider the linear system 
\begin{equation}
\label{semi}
\left\{
\begin{array}{l}
\dot{h}=\XC(h)+\sum_{j=1}^mu_jY_j(h)\\
\dot{x}=\DC x+\sum_{j=1}^mu_j(\rho(h)Z_j)(x).
\end{array}
\right.
\end{equation}

Let us fix $(h, x)\in H\times_{\rho}\fu$ and $u\in\UC$. In order to avoid cumbersome notation, we denote by $t\mapsto (h_t, x_t)$ the solution of \ref{semi} associated with $u\in\UC$ and initial condition $(h_0, x_0)=(h, x)$. Consider the continuous curve on $\fu$ given by
$$t\in\R\mapsto Z_t\in\fu, \;\;\;\mbox{ where }\;\;Z_t:=\rho(h_t)\left(\sum_{j=1}^mu_j(t)Z_j\right).$$
Since $\rho(h_t)\in\mathrm{Aut}(\fu)$ is a linear map, 
$$\sum_{j=1}^mu_j(t)(\rho(h_t)Z_j)(x_t)=\sum_{j=1}^mu_j(t)\sum_{p=0}^nc_p\ad(x_t)^p\rho(h_t)Z_j=\sum_{p=0}^nc_p\ad(x_t)^pZ_t=Z_t(x_t).$$
In particular, the second equation in (\ref{semi}) can be rewritten as 
\begin{equation}
\label{alternative}
\dot{x}_t=\DC x_t+Z_t(x_t).
\end{equation}

Consider $\fu^1\supset \fu^2\supset\cdots\supset\fu^k\supset\fu^{k+1}=\{0\}$ to be the central series of $\fu$ defined as
$$\fu^1=\fu \;\;\;\;\mbox{ and }\;\;\;\;\fu^{i+1}=[\fu^i, \fu], \;\;\;\;\mbox{ for }\;\;\;i\in\{1, \ldots, k\}.$$
Let $i\in\{1, \ldots, k\}$ and choose $V_i\subset \fu^i$ to be a complementary space of $\fu^{i+1}$ in $\fu^i$, that is, 
\begin{equation}
\label{deco}
V_i\oplus\fu^{i+1}=\fu^i.\hspace{1cm}\mbox{ In particular, }\;\fu^i=\bigoplus_{l=i}^kV_l, \;\;\mbox{ for }\;\;i\in\{1, \ldots, k\}.
\end{equation}
We use the notation $x=(x^1, \ldots, x^k)$ to emphasize the decomposition of $x\in \fu$ in the components $x^i\in V_i$. We aim to write the solutions of the system (\ref{alternative}) using the decomposition (\ref{deco}) in the $V_i$-components.  

For any derivation $\DC$, the fact that $\DC\fu^i\subset \fu^i$ gives us a block-triangular decomposition form
\begin{equation}
\label{block}
\DC=\left(\begin{array}{ccccc}
\DC_{11} &     0    &     0    & \cdots &    0    \\
\DC_{21} & \DC_{22} &     0    & \cdots &    0    \\
\DC_{31} & \DC_{32} & \DC_{33} & \cdots &    0    \\
\vdots   & \vdots   & \vdots   & \ddots &  \vdots \\
\DC_{k1} & \DC_{k2} & \DC_{k3} & \cdots & \DC_{kk}
\end{array}\right), \;\;\mbox{ where }\;\;\DC_{ij}:V_j\rightarrow V_i\;\;\mbox{ is a linear map}.
\end{equation}
Since for any $x\in \fu$ the map $\ad(x)$ is a derivation, we can consider its block-triangular decomposition as previously. Let $p\in\{1, \ldots, k-1\}$ and consider the linear map $B^p_{i, j}(x): V_j\rightarrow V_i$, obtained from the block-triangular decomposition of $\ad(x)^p$. Our subsequent analysis will be based on the following lemma.

\begin{lemma}
	With the previous notations, for any $p\in\{1, \ldots, k-1\}$ it holds  
	$$B^p_{ij}(x)=\left\{\begin{array}{cc} 0 & \;\mbox{ for }\;i<p+j\\B_{ij}^p\left(x^1, \ldots, x^{i-j-p+1}\right) & \;\mbox{ for }\;i\geq p+j \end{array}\right.,$$
	when $x=(x^1, \ldots, x^k)$.
\end{lemma}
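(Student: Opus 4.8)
The plan is to reduce everything to a single structural fact about the series $\fu^1\supset\cdots\supset\fu^{k+1}=\{0\}$ introduced above, namely the graded inclusion $[\fu^a,\fu^b]\subset\fu^{a+b}$ for all $a,b\geq1$. This is standard and I would record it first as a preliminary observation, proved by induction on $b$: the base case $b=1$ is the defining relation $\fu^{a+1}=[\fu^a,\fu]$, and the inductive step expands $[\fu^a,\fu^{b+1}]=[\fu^a,[\fu^b,\fu]]$ by the Jacobi identity into $[[\fu^a,\fu^b],\fu]+[\fu^b,[\fu^a,\fu]]$, both summands landing in $\fu^{a+b+1}$ by the inductive hypothesis and the definition of the series. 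From this I extract the one consequence I actually need. Writing $x=(x^1,\dots,x^k)$ with $x^l\in V_l\subset\fu^l$, linearity of $\ad$ gives $\ad(x)=\sum_{l=1}^k\ad(x^l)$, and for each summand
$$\ad(x^l)\,\fu^j=[x^l,\fu^j]\subset[\fu^l,\fu^j]\subset\fu^{l+j}=\bigoplus_{s\geq l+j}V_s,$$
so $\ad(x^l)$ raises the filtration degree by at least $l$, and in block form its $(i,j)$ entry vanishes unless $i\geq l+j$.

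The key step is then to expand the $p$-th power. Since $\ad$ is linear,
$$\ad(x)^p=\Bigl(\sum_{l=1}^k\ad(x^l)\Bigr)^p=\sum_{(l_1,\dots,l_p)}\ad(x^{l_1})\cdots\ad(x^{l_p}),$$
the sum running over all ordered $p$-tuples with entries in $\{1,\dots,k\}$; the factors need not commute, but the expansion of a power of a sum of operators is still precisely the sum over ordered tuples. Applying the degree-raising property $p$ times in succession, each summand maps $V_j\subset\fu^j$ into $\fu^{j+l_1+\cdots+l_p}$, and hence its $(i,j)$ block vanishes unless $i\geq j+\sum_{s}l_s$. Since $B^p_{ij}(x)$ is exactly the $(i,j)$ component of $\ad(x)^p$, this shows that only the tuples with $\sum_s l_s\leq i-j$ can contribute to it.

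Both assertions of the lemma now drop out of this bound. Because every $l_s\geq1$ we have $\sum_s l_s\geq p$, so a contributing tuple forces $i\geq j+p$; equivalently $B^p_{ij}(x)=0$ whenever $i<p+j$, which is the first case. For the second case, fix a contributing tuple and any index $s_0$: since the remaining $p-1$ factors contribute at least $p-1$ to the sum, one gets $l_{s_0}\leq(i-j)-(p-1)=i-j-p+1$. Thus every component $x^l$ entering a nonvanishing summand satisfies $l\leq i-j-p+1$, so $B^p_{ij}(x)$ is a function of $x^1,\dots,x^{i-j-p+1}$ alone, exactly as stated. The only real care needed will be in this last bookkeeping step — keeping the ordered-tuple expansion honest and reading off the sharp bound $i-j-p+1$ on the indices of the surviving components — together with the graded bracket inclusion at the outset; the remaining estimates are routine composition of the filtration bounds.
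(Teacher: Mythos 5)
Your proof is correct, but it takes a genuinely different route from the paper. The paper argues by induction on $p$: the base case $p=1$ comes from $\ad(x^l)V_j\subset\fu^{j+l}$, and the inductive step unfolds the block recursion $B^{p+1}_{ij}(x)=\sum_{l}B_{il}(x)B^p_{lj}(x)$, reading off from the two-sided constraint $p+j\leq l\leq i-1$ both the vanishing for $i<(p+1)+j$ and the bound $i-j-(p+1)+1$ on the indices of the surviving components. You instead expand $\ad(x)^p$ directly as a noncommutative sum over ordered tuples $(l_1,\ldots,l_p)$ of compositions $\ad(x^{l_1})\cdots\ad(x^{l_p})$ and derive everything from the single filtration bound that such a summand maps $V_j$ into $\fu^{j+l_1+\cdots+l_p}$: the constraint $\sum_s l_s\leq i-j$ with each $l_s\geq 1$ yields both the vanishing ($\sum_s l_s\geq p$) and the sharp index bound $l_{s_0}\leq i-j-p+1$ in one stroke, which is arguably more transparent than the inductive bookkeeping. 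A further difference in your favor: you actually prove the graded inclusion $[\fu^a,\fu^b]\subset\fu^{a+b}$ for the lower central series (by induction on $b$ via Jacobi), whereas the paper invokes $\ad(x^l)V_j\subset\fu^{j+l}$ tacitly in its base case; this is standard but worth recording, as you do. One cosmetic remark: since $\ad(x^k)\fu\subset\fu^{k+1}=\{0\}$, the tuples in your expansion may be restricted to entries in $\{1,\ldots,k-1\}$, matching the paper's range for the sum defining $\ad(x)$ --- including $l=k$ as you do is harmless, as those summands vanish.
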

\begin{proof}
	The proof will proceed by induction. Since for any $V_i\subset \fu^i$ we have that $\ad(x_l) V_j\subset\fu^{j+l}=\bigoplus_{q={j+l}}^kV_q$ for any $x_l\in V_l$. Hence,  $B_{ij}(x^l)=0$ for any $x_l\in V_l$ if $i<l+j$, implying that
$$\ad(x)=\sum_{l=1}^{k-1}\ad(x^l)\;\;\; \implies \;\;\;B_{ij}(x)=\left\{\begin{array}{cc} 0 & \;\mbox{ for }\;i<j+1\\ B_{ij}\left(x^1, \cdots, x^{i-j}\right) & \;\mbox{ for }\;i\geq j+1 \end{array}\right.,$$
and showing the result for $p=1$. If the result is true for $p$, a simple calculation shows that 
$$B^{p+1}_{ij}(x)=\sum_{l=1}^kB_{il}(x)B^p_{lj}(x).$$
By inductive hypothesis, it holds  
$$B_{il}(x)=\left\{\begin{array}{cc} 0 & \;\mbox{ for }\;i<1+l\\B_{il}\left(x^1, \ldots, x^{i-l}\right) & \;\mbox{ for }\;i\geq 1+l \end{array}\right.\mbox{ and }\;\;B^p_{lj}(x)=\left\{\begin{array}{cc} 0 & \;\mbox{ for }\;l<p+j\\B_{lj}^p\left(x^1, \ldots, x^{l-j-p+1}\right) & \;\mbox{ for }\;l\geq p+j \end{array}\right..$$
Therefore, $B^{p+1}_{ij}(x)=0$ for $i<(p+1)+j$ and
$$B^{p+1}_{ij}(x)=\sum_{p+j\leq l\leq i-1}B_{il}\left(x^1, \ldots, x^{i-l}\right)B^p_{lj}\left(x^1, \ldots, x^{l-j-p+1}\right),$$
which certainly only depends on $x^1, \ldots, x^{i-j-(p+1)+1}$ and thus 
$$B^{p+1}_{ij}(x)=B^{p+1}_{ij}\left(x^1, \ldots, x^{i-j-(p+1)+1}\right)\;\; \mbox{ if }\;\;i\geq(p+1)+j,$$
concluding the proof.
\end{proof}

\bigskip

On the other hand, if $Z=(Z^1, \ldots, Z^k)\in\fu$, then 
$$\left(\ad(x)^pZ\right)^i=\sum_{j=1}^kB^p_{ij}(x)Z^j=\sum_{j=1}^{i-p}B^p_{ij}(x^1, \ldots, x^{i-j-p+1})Z^j,\;\;\mbox{ if }\;\;p<i$$
and $\left(\ad(x)^pZ\right)^i=0$ if $p\geq i$.  Therefore, 
$$\left(Z(x)\right)^i=\left(\sum_{p=0}^{k-1}c_p\ad(x)^pZ\right)^i=\sum_{p=0}^{k-1}c_p\left(\ad(x)^pZ\right)^i=Z^i+\sum_{p=1}^{i-1}c_p\sum_{j=1}^{i-p}B^p_{ij}(x^{1}, \ldots, x^{i-j-p+1})Z^j,$$
and the $V_i$-component of $Z(x)$ just depends on $x^1, \ldots, x^{i-1}$. For any $i\in\{1, \ldots, k\}$ we can define the continuous map
$G^i:V_1\times\cdots\times V_{i-1}\times \fu\rightarrow V_i$ by
$$G^1(Z):=Z^1\;\;\;\mbox{ and }\;\;\;G^i(x^1, \ldots, x^{i-1}; Z):=\sum_{j=1}^{i-1}\DC_{ij}x^{j}+\left(Z(x)\right)^i, \;\;\;\mbox{ for }\;i\geq 2.$$

In the sequel, we give a decomposition of (\ref{alternative}) in terms of the maps $G^i$ above.

\begin{theorem}
	\label{sol}
	With the previous notations, the system (\ref{alternative}) reads in coordinates as
	\begin{equation}
	\label{components}
	\left\{\begin{array}{l}
	\dot{x}^1_t=\DC_{11}x^1_t+G^1\left(Z_t\right)\\
	\dot{x}^2_t=\DC_{22}x^2_t+G^2\left(x^1_t; Z_t\right)\\
	\dot{x}^3_t=\DC_{33}x^3_t+G^3\left(x^1_t; x^2_t; Z_t\right)\\
	\hspace{2,3cm}\vdots\\
	\dot{x}^k_t=\DC_{kk}x^k_t+G^k\left(x^1_t, \ldots, x^{k-1}_t;Z_t\right)
	\end{array}\right..
	\end{equation}
	Moreover, for any $i\in\{1, \ldots, k\}$ it holds that  
	\begin{equation}
	\label{solutiondirect}
	x_t^i=\int_0^t\rme^{(t-s)\DC_{ii}}G^i\left(x^1_s, \ldots, x^{i-1}_s; Z_s\right)ds+\rme^{t\DC_{ii}}x_0^i, \;\;\mbox{ for }\;\;i=1, \ldots, k.
	\end{equation}
\end{theorem}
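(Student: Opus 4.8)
The plan is to prove the two assertions in turn: the coordinate form (\ref{components}) is obtained by projecting (\ref{alternative}) onto each subspace $V_i$ and matching terms against the definition of $G^i$, while the integral formula (\ref{solutiondirect}) follows from the variation of parameters formula once the triangular structure is exploited.

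For the first claim I would project the equation $\dot{x}_t=\DC x_t+Z_t(x_t)$ onto the $V_i$-component. The block-triangular form (\ref{block}) of $\DC$, which reflects $\DC\fu^i\subset\fu^i$ and hence $\DC_{ij}=0$ for $i<j$, gives
$$(\DC x_t)^i=\sum_{j=1}^i\DC_{ij}x^j_t=\DC_{ii}x^i_t+\sum_{j=1}^{i-1}\DC_{ij}x^j_t.$$
For the remaining term I would invoke the component expression computed immediately before the theorem,
$$(Z_t(x_t))^i=Z_t^i+\sum_{p=1}^{i-1}c_p\sum_{j=1}^{i-p}B^p_{ij}(x^1_t, \ldots, x^{i-j-p+1}_t)Z_t^j,$$
which depends only on $x^1_t, \ldots, x^{i-1}_t$. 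Adding the two contributions and comparing with $G^i(x^1, \ldots, x^{i-1}; Z)=\sum_{j=1}^{i-1}\DC_{ij}x^j+(Z(x))^i$ --- a definition tailored precisely to collect the off-diagonal drift blocks together with $(Z(x))^i$ --- yields $\dot{x}^i_t=\DC_{ii}x^i_t+G^i(x^1_t, \ldots, x^{i-1}_t; Z_t)$, with the case $i=1$ immediate since $(Z_t(x_t))^1=Z_t^1=G^1(Z_t)$. This is exactly (\ref{components}).

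The key structural point to stress is that the forcing term in the $i$-th equation depends only on the strictly lower-index components, so the system (\ref{components}) is genuinely triangular. Consequently, once $x^1_t, \ldots, x^{i-1}_t$ are regarded as known, the $i$-th equation is a linear non-autonomous ODE $\dot{x}^i_t=\DC_{ii}x^i_t+f^i(t)$ with constant coefficient block $\DC_{ii}$ and continuous forcing $f^i(t):=G^i(x^1_t, \ldots, x^{i-1}_t; Z_t)$, the continuity following from that of $t\mapsto Z_t$, of each $t\mapsto x^j_t$, and of $G^i$. The variation of parameters formula for such an equation then gives $x^i_t=\rme^{t\DC_{ii}}x^i_0+\int_0^t\rme^{(t-s)\DC_{ii}}f^i(s)\,ds$, which is precisely (\ref{solutiondirect}); alternatively one verifies a posteriori by differentiation that the stated expression solves (\ref{components}).

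I do not anticipate a serious obstacle, since both parts reduce to the preliminary Lemma on $B^p_{ij}$ and to standard linear ODE theory. The only step demanding genuine care is checking that $G^i$ captures all contributions to the $V_i$-component and no more --- in particular that the diagonal term $\DC_{ii}x^i_t$ is cleanly separated and that no component $x^j_t$ with $j\geq i$ enters $G^i$ --- as it is exactly this triangularity that legitimizes the recursive, Duhamel-type solution (\ref{solutiondirect}).
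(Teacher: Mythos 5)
Your proposal is correct and follows essentially the same route as the paper: project $\dot{x}_t=\DC x_t+Z_t(x_t)$ onto each $V_i$ using the block-triangular form of $\DC$ and the previously computed expression for $\left(Z_t(x_t)\right)^i$, match against the definition of $G^i$, and then solve the resulting linear non-autonomous equation by the integrating factor $\rme^{-t\DC_{ii}}$ (variation of parameters), exactly as in the paper's proof. Your added emphasis on the triangular dependence and continuity of the forcing is a harmless elaboration of the same argument.
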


\begin{proof}
	Since 
	$$(\DC x)^i=\sum_{j=1}^i\DC_{ij}x^j=\DC_{ii}x^i+\sum_{j=1}^{i-1}\DC_{ij}x^j,$$
	we get
	$$\dot{x}^i_t=\left(\DC x_t+Z_t(x_t)\right)^i=\left(\DC x_t\right)^i+\left(Z_t(x_t)\right)^i=\DC_{ii}x_t^i+\sum_{j=1}^{i-1}\DC_{ij}x_t^j+\left(Z_t(x_t)\right)^i=\DC_{ii}x^i_t+G^i\left(x^1_t, \ldots, x^{i-1}_t; Z_t\right),$$
	which proves equations (\ref{components}). Equation (\ref{solutiondirect}) follows direct from integration. In fact,
	$$\dot{x}^i_t=\DC_{ii}x^1_t+G\left(x^1_t, \ldots, x_t^{i-1}; Z_t\right)\iff \frac{d}{dt}\rme^{-t\DC_{ii}}x^i_t=\rme^{-t\DC_{ii}}G\left(x^1_t, \ldots, x_t^{i-1}; Z_t\right)$$
	$$\iff \rme^{-t\DC_{ii}}x^i_t-x_0^i=\int_0^t\rme^{-s\DC_{ii}}G\left(x^1_s, \ldots, x_s^{i-1}; Z_s\right)ds\iff x^i_t=\int_0^t\rme^{(t-s)\DC_{ii}}G\left(x^1_s, \ldots, x_s^{i-1}; Z_s\right)ds+\rme^{t\DC_{ii}}x_0^i.$$ 		 
\end{proof}

\begin{remark}
	It is relevant to notice that in the previous calculations we have also the dependence on the point $(h, x)\in H\times_{\rho}\fu$ and on the control $u\in\UC$ which we previously fixed. If we want to emphasize the dependence on these parameters we will use the notations 
	$$h_{t, u}, \;\;\;x_{t, u, h}, \;\;\;\mbox{ and }\;\;\;Z_{t, u, h}.$$
\end{remark}

\bigskip

The next lemma will be central in the proof of our main result.

\begin{lemma}
	\label{central}
	Assume that $H$ is a compact group and that $\DC$ has only eigenvalues with nonzero real part. Then, the set of central periodic points of the linear system (\ref{semi}) is bounded.	
\end{lemma}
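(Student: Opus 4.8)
The plan is to first identify the central subgroup of the system (\ref{semi}), then reduce the boundedness assertion to a coordinate-wise estimate along the triangular form (\ref{components}), and finally run an induction over the levels of the central series. Since $H$ is compact, the flow $\varphi_t|_H$ is a one-parameter subgroup of $\mathrm{Aut}(H)$; on the toral factor of $H$ the automorphism group is discrete, so $\varphi_t$ is trivial there, while on the semisimple factor it lies in a compact group. Hence the derivation induced on $\fh$ has only purely imaginary eigenvalues. As the linear drift of (\ref{semi}) is block-triangular with respect to the ideal $\fu$, the spectrum of its derivation is the union of the spectrum of the $\fh$-block with that of $\DC$, and by hypothesis the latter consists of eigenvalues with nonzero real part. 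Therefore the central subalgebra is exactly $\fh$ and the central subgroup is $G^0=H\times\{0\}$, so that $\mathrm{Per}(\Sigma)=\mathrm{Per}(H\times\{0\};\Sigma)$. Because $H$ is compact, a subset of $H\times_{\rho}\fu$ is bounded if and only if its projection onto $\fu$ is bounded, and it suffices to bound the $\fu$-component of the central periodic points.

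The core estimate I would isolate is the following. Fix a norm on $\fu$. Since $H$ and $\Omega$ are compact, the curve $Z_t=\rho(h_t)\bigl(\sum_j u_j(t)Z_j\bigr)$ ranges in the compact set $\rho(H)\cdot\{\sum_j u_j Z_j:u\in\Omega\}$, so $\|Z_t\|\leq M_0$ uniformly in $t$, $u$ and $h$. For each $i$ the diagonal block $\DC_{ii}$ is the map induced by $\DC$ on $V_i\cong\fu^i/\fu^{i+1}$; as $\DC$ is block-triangular, the spectrum of $\DC_{ii}$ is contained in that of $\DC$ and thus consists of eigenvalues with nonzero real part. Split $V_i=V_i^+\oplus V_i^-$ into the $\DC_{ii}$-invariant subspaces of positive and negative real part, with constants $C,\mu>0$ such that $\|\rme^{t\DC_{ii}^-}\|\leq C\rme^{-\mu t}$ and $\|\rme^{-t\DC_{ii}^+}\|\leq C\rme^{-\mu t}$ for $t\geq 0$. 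If the forcing $G^i$ along the trajectory is bounded by $M_i$, then evaluating (\ref{solutiondirect}) with $x_0^i=0$ along the incoming trajectory and projecting onto $V_i^-$ bounds the stable component by $CM_i/\mu$; solving (\ref{solutiondirect}) for the initial value along the outgoing trajectory that ends at $x^i=0$ and projecting onto $V_i^+$ bounds the unstable component by $CM_i/\mu$. Thus $\|x^i\|\leq 2CM_i/\mu$.

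With this in hand I would argue by induction on $i\in\{1,\ldots,k\}$ that the $V_i$-component of every central periodic point is bounded by a constant $R_i$ independent of the point. For $i=1$ the forcing is $G^1(Z_t)=Z_t^1$, bounded by $M_0$, so the core estimate applies directly. For the inductive step, let $(h,x)$ be central periodic and let $t\mapsto(h_t,x_t)$ be the concatenated trajectory of Lemma \ref{equi} that starts and ends in $H\times\{0\}$ and passes through $(h,x)$. By Lemma \ref{equi} every point $(h_s,x_s)$ on this trajectory is again central periodic, so by the inductive hypothesis $\|x_s^j\|\leq R_j$ for all $j<i$ and all $s$. Since $G^i$ is continuous and $Z_s$ is bounded, the forcing $G^i(x_s^1,\ldots,x_s^{i-1};Z_s)$ is then bounded by a constant $M_i$ depending only on $R_1,\ldots,R_{i-1}$ and $M_0$, and the core estimate gives $\|x^i\|\leq R_i:=2CM_i/\mu$. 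Summing over $i$ bounds $\|x\|$ uniformly, which proves boundedness of $\mathrm{Per}(\Sigma)$.

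The main obstacle is precisely the inductive bounding of the forcing term along the entire connecting trajectory rather than at its endpoints: the term $G^i(x_s^1,\ldots,x_s^{i-1};Z_s)$ in (\ref{components}) depends on the intermediate values $x_s^j$, which a priori are not controlled. The device that removes this difficulty is Lemma \ref{equi}, which ensures that every intermediate point is itself central periodic, so the inductive bounds on the lower components hold uniformly in $s$. A secondary delicate point, to be handled at the outset, is the identification $G^0=H\times\{0\}$, which relies on a one-parameter group of automorphisms of a compact group having a derivation with purely imaginary spectrum.
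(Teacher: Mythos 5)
Your proposal is correct and follows essentially the same route as the paper's proof: reduce to the $\fu$-projection, bound each $V_i$-component via the variation-of-constants formula (\ref{solutiondirect}) with the stable part estimated along the incoming trajectory and the unstable part along the outgoing one, and induct on the central series using Lemma \ref{equi} to guarantee that the lower components of all intermediate points stay in the already-bounded sets. The only difference is that you spell out why $(H\times_{\rho}\fu)^0=H\times\{0\}$, a fact the paper asserts without proof.
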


\begin{proof} Under the lemma's assumptions $(H\times_{\rho}\fu)^0=H\times\{0\}$. For simplicity, consider $\mathbf{P}:=\mathrm{Per}\left(\Sigma_{H\times_{\rho}\fu}\right)$. To prove the lemma we first remark that, since $H$ is a compact group, we only have to show that $\pi_2\left(\mathbf{P}\right)$ is bounded in $\fu$, where $\pi_2$ is the projection onto the second factor. By defining 
	$$\pi_{2, i}:H\times_{\rho}\fu\rightarrow V_i, \;\;\; (h, (x^1, \ldots, x^k))\mapsto x^i,$$
it follows that $\pi_2\left(\mathbf{P}\right)$ is bounded in $\fu$ if and only if $\pi_{2, i}\left(\mathbf{P}\right)$ is bounded in $V_i$ for any $i=1, \ldots, k$, which we will prove recurrently after some preliminaries.

By the block-triangular decomposition form of $\DC$ given in (\ref{block}), it follows that if $\DC$ has only eigenvalues with nonzero real part the same is true for $\DC_{ii}$ for any $i\in\{1, \ldots, k\}$. Under such assumption, for any $i\in\{1, \ldots, k\}$ we can consider the decomposition $V_i=V_i^+\oplus	V_i^-$, where $V_i^{+} (\mbox{resp. }V_i^-)$ is the sum of the real generalized eigenspaces of $\DC_{ii}$ associated with eigenvalues with positive (resp. negative) real parts. Therefore, if $|\cdot|$ is a norm in $\fu$ there exist constants $\kappa_i, \mu_i>0$ such that 
$$|\rme^{t\DC_{ii}}\pi_i^-(x^i)|\leq \kappa_i\rme^{-t\mu_i}|\pi_i^-(x^i)|\;\;\;\;\mbox{ and }\;\;\;\;|\rme^{-t\DC_{ii}}\pi_i^+(x^i)|\leq \kappa_i\rme^{-t\mu_i}|\pi^+(x^i)|, \;\;\mbox{ for any }\; t> 0, \;x^i\in V_i,$$
where $\pi_i^{\pm}:V_i\rightarrow V_i^{\pm}$ are the projections associated with the decomposition $V_i=V_i^+\oplus V_i^-$. Let us fix $M_1>0$ such that 
$$|G^1\left(Z_{t, u, h}\right)|\leq M_1, \;\;\;\mbox{ for all }\;\;\;t\geq 0,\, h\in H\;\mbox{ and }\; u\in\UC,$$
which exists by the compactness of $H\times\UC$ and the continuity of $G^1$.

Let then $x^1\in\pi_{2, 1}\left(\mathbf{P}\right)$ and consider $h_1, h_2\in H$, $\tau_1, \tau_2>0$ and $u_1, u_2\in\UC$ such that 
$$x^1_{0, u_1, h_1}=x^1_{\tau_2, u_2, h_2}=0\;\;\;\;\mbox{ and }\;\;\;\;x^1_{0, u_2, h_2}=x^1_{\tau_1, u_1, h_1}=x^1.$$
Through Theorem \ref{sol} we obtain 
$$x^1=\int_0^{\tau_1}\rme^{(\tau_1-s)\DC_{11}}G^1\left(Z_{s, u_1, h_1}\right)ds\;\;\;\;\;\mbox{ and }\;\;\;\;\;0=\int_0^{\tau_2}\rme^{(\tau_2-s)\DC_{11}}G^1\left(Z_{s, u_2, h_2}\right)ds+\rme^{\tau_2\DC_{11}}x^1,$$
implying that
$$\hspace{-6,5cm}\left|\pi_1^-(x^{1})\right|\leq \int_{0}^{\tau_1}\left|\rme^{(\tau_1-s)\DC_{11}}\pi^-_1\left(G^1\left(Z_{s, u_1, h_1}\right)\right)\right|ds$$
$$\leq \int_{0}^{\tau_1}\kappa_1\rme^{-(\tau_1-s)\mu_1}\left|\pi^-\left(G^1(Z_{s, u_1, h_1})\right)\right|ds\leq \frac{\kappa_1}{\mu_1}M_1(1-\rme^{-\tau_1\mu_1})\leq \frac{\kappa_1}{\mu_1}M_1,$$
and also
$$\hspace{-7cm}\left|\pi_1^+(x^{1})\right|\leq \int_{0}^{\tau_2}\left|\rme^{-s\DC_{11}}\pi^+_1\left(G^1(Z_{s, u_2, h_2})\right)\right|ds$$
$$\hspace{-0.7cm}\leq \int_{0}^{\tau_2}\kappa_1\rme^{-s\mu_1}\left|\pi^+\left(G^1(Z_{s, u_2, h_2})\right)\right|ds\leq \frac{\kappa_1}{\mu_1}M_1(1-\rme^{-\tau_2\mu_1})\leq \frac{\kappa_1}{\mu_1}M_1.$$
Consequently,
$$\left|x^1\right|=\left|\pi^+_1(x^1)+\pi^-_1(x^1)\right|\leq \left|\pi^+_1(x^1)\right|+\left|\pi^-_1(x^1)\right|\leq 2\frac{\kappa_1}{\mu_1}M_1,$$
proving the boundedness of $\pi_{2, 1}\left(\mathbf{P}\right)$.
 
Let $i\in\{2, \ldots, k\}$ and assume that $\pi_{2, j}\left(\mathbf{P}\right)\subset V_j$ is a bounded set for $j<i$. If $x^i\in \pi_{2, i}\left(\mathbf{P}\right)$, it holds that 
$$x^i_{0, u_1, h_1}=x^i_{\tau_2, u_2, h_2}=0\;\;\;\;\mbox{ and }\;\;\;\;x^i_{0, u_2, h_2}=x^i_{\tau_1, u_1, h_1}=x^i$$
for some $h_1, h_2\in H$, $\tau_1, \tau_2>0$ and $u_1, u_2\in\UC$. Again by Theorem \ref{sol} we get  
$$x^i=\int_0^{\tau_1}\rme^{(\tau_1-s)\DC_{ii}}G\left(x^1_{s, u_1, h_1}, \ldots, x^{i-1}_{s, u_1, h_1}; Z_{s, u_1, h_1}\right)ds,$$
and
$$0=\int_0^{\tau_2}\rme^{(\tau_2-s)\DC_{ii}}G\left(x^1_{s, u_2, h_2}, \ldots, x^{i-1}_{s, u_2, h_2}; Z_{s, u_2, h_2}\right)ds+\rme^{\tau_2\DC_{ii}}x^i.$$
On the other hand, by inductive hypothesis, $\pi_{2, j}\left(\mathbf{P}\right)$ is a bounded set for $j=1, \ldots, i-1$. Hence, the continuity of $G^i$ assures the existence of $M_i>0$ such that 
$$\left|G_i\left(x^1, \ldots, x^{i-1}; Z_{t, u}\right)\right|\leq M_i, \;\;\;\mbox{ for any }\;\;\; x^j\in \pi_{2, j}\left(\mathbf{P}\right), \;\;j=1, \ldots, i-1, \;t>0, \;u\in\UC.$$
By Lemma \ref{equi} it holds that, 
$$x^j_{s, u_1, h_1}\in\pi_{2, j}\left(\mathbf{P}\right), \;\;\;\;s\in[0, \tau_1]\;\;\;\;\mbox{ and }\;\;\;\; x^j_{s, u_2, h_2}\in\pi_{2, j}\left(\mathbf{P}\right), \;\;\;\;s\in[0, \tau_2],$$
for any $j=1, \ldots, i-1$. Hence,
$$\hspace{-6,4cm}\left|\pi_i^-(x^i)\right|\leq \int_{0}^{\tau_1}\left|\rme^{(\tau_1-s)\DC_{ii}}\pi^-_i\left(G\left(x^1_{s, u_1, h_1}, \ldots, x^{i-1}_{s, u_1, h_1}; Z_{s, u_1, h_1}\right)\right)\right|ds$$
$$\leq \int_{0}^{\tau_1}\kappa_i\rme^{-(\tau_1-s)\mu_i}\left|\pi^-_i\left(G\left(x^1_{s, u_1, h_1}, \ldots, x^{i-1}_{s, u_1, h_1}; Z_{s, u_1, h_1}\right)\right)\right|ds\leq \frac{\kappa_i}{\mu_i}M_i(1-\rme^{-\tau_1\mu_i})\leq \frac{\kappa_i}{\mu_i}M_i$$
and 
$$\hspace{-6,3cm}\left|\pi_i^+(x^{i})\right|\leq \int_{0}^{\tau_2}\left|\rme^{-s\DC_{ii}}\pi^+_i\left(G\left(x^1_{s, u_2, h_2}, \ldots, x^{i-1}_{s, u_2, h_2}; Z_{s, u_2, h_2}\right)\right)\right|ds$$
$$\leq \int_{0}^{\tau_2}\kappa_i\rme^{-s\mu_i}\left|\pi^+_i\left(G\left(x^1_{s, u_2, h_2}, \ldots, x^{i-1}_{s, u_2, h_2}; Z_{s, u_2, h_2}\right)\right)\right|ds\leq \frac{\kappa_i}{\mu_i}M_i(1-\rme^{-\tau_2\mu_i})\leq \frac{\kappa_i}{\mu_i}M_i,$$
implying that
$$\left|x^i\right|=\left|\pi^+_i(x^i)+\pi^-_i(x^i)\right|\leq \left|\pi^+_i(x^i)\right|+\left|\pi^-_i(x^i)\right|\leq 2\frac{\kappa_i}{\mu_i}M_i.$$
Since $x^i\in\pi_{2, i}\left(\mathbf{P}\right)$ is arbitrary, we get that $\pi_{2, i}\left(\mathbf{P}\right)$ is a bounded set, finishing the proof.	
\end{proof}

\section{The main result}

In this section we show our main result, namely, that the compactness of the central subgroup of a linear system is a necessary and sufficient condition for the boundedness of the central periodic points of $\Sigma_G$.

\begin{theorem}
	\label{teo}
	Let $G$ be connected Lie group and $\Sigma_G$ a linear system on $G$. Then, the central subgroup of $\Sigma_G$ is a compact subgroup if, and only if, the set of the central periodic points of $\Sigma_G$ is bounded.
\end{theorem}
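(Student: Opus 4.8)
The plan is to prove the two implications separately. The reverse implication ``$\mathrm{Per}(\Sigma_G)$ bounded $\Rightarrow G^0$ compact'' I would establish by contraposition: $G^0$ is a $\varphi$-invariant closed subgroup, so the Remark following the definition of $F$-periodic points gives $G^0\subset\mathrm{Per}(G^0;\Sigma_G)=\mathrm{Per}(\Sigma_G)$. Since a closed subset of a connected Lie group equipped with a complete left-invariant metric is bounded exactly when it is compact, a non-compact $G^0$ is unbounded and forces $\mathrm{Per}(\Sigma_G)$ to be unbounded as well.

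For the converse ``$G^0$ compact $\Rightarrow\mathrm{Per}(\Sigma_G)$ bounded'' the goal is to conjugate $\Sigma_G$ to a system of the form (\ref{semi}) covered by Lemma \ref{central}. The difficulty is that, although compactness of $G^0$ forces $\fg^+,\fg^-\subset\fn$ (Lemma \ref{lemma1}) so that $\fg^{+,-}$ is nilpotent, the subspace $\fg^{+,-}$ need not be a subalgebra because $[\fg^+,\fg^-]$ may have a component in $\fg^0$; hence $G^{+,-}$ need not be a subgroup and Lemma \ref{conj} cannot be invoked directly. To repair this I would first pass to the quotient $\psi:G\to\bar G:=G/(N\cap G^0)$. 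By Lemma \ref{lemma1} the subgroup $N\cap G^0$ is compact, connected and normal, and being $\varphi$-invariant the flow and the left-invariant control fields descend; thus $\psi$ is a conjugation in the sense of (\ref{solu}) with $\ker\psi=N\cap G^0$ compact, $\ker\psi\cdot G^0=G^0$, and $\psi(G^0)=\bar G^0$ compact.

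The advantage of this quotient is that the nilradical $\bar N$ of $\bar G$ has trivial central part: its algebra $\bar{\fn}\cong\fg^+\oplus\fg^-$ carries the induced derivation, all of whose eigenvalues have nonzero real part, whence $\bar{\fg}^{+,-}=\bar{\fn}$ is a genuine subalgebra and $\bar G^{+,-}=\bar N$ a subgroup. As $\bar G^0$ is compact, $\bar G$ is decomposable, so Lemma \ref{conj} yields $\bar G\cong\bar G^0\times_{\Ad}\bar{\fn}$; since $\bar{\DC}$ preserves $\bar{\fg}^0\oplus\bar{\fn}$ this isomorphism carries $\Sigma_{\bar G}$ to a system of type (\ref{semi}) with compact factor $H=\bar G^0$ and derivation $\DC=\bar{\DC}|_{\bar{\fn}}$ having only eigenvalues of nonzero real part. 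Lemma \ref{central} then yields the boundedness of $\mathrm{Per}(\Sigma_{\bar G})$, and the final assertion of Proposition \ref{control} (applied with $F=G^0$ and $\ker\psi$ compact) transfers this back to the boundedness of $\mathrm{Per}(\Sigma_G)$.

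I expect the main obstacle to be confirming that the two maps are conjugations of the \emph{linear systems}, and not merely group morphisms: that $\psi$ intertwines the drift flow as well as each control field, and that the isomorphism of Lemma \ref{conj} sends the drift $\bar{\XC}$ to the drift $(\XC,\DC)$ of (\ref{semi}). Both points hinge on the $\varphi$- and $\DC$-invariance of $\fn$, of $G^0$ and of their intersection, so I would verify these invariances carefully before invoking Lemma \ref{central} and Proposition \ref{control}.
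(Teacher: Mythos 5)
Your proposal is correct and follows essentially the same route as the paper: the easy implication via $G^0\subset\mathrm{Per}(\Sigma_G)$ with $G^0$ closed, and the converse via the quotient by the compact, connected, normal subgroup $N\cap G^0$ from Lemma \ref{lemma1}, the semidirect-product isomorphism of Lemma \ref{conj}, the key estimate of Lemma \ref{central}, and the transfer of boundedness through Proposition \ref{control}. The only difference is organizational: the paper first settles the case where $G^{+,-}$ is a subgroup and then reduces the general case to it by passing to the quotient, whereas you run the whole chain directly on the quotient, where $\bar{G}^{+,-}=\pi(N)$ is automatically a subgroup.
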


\begin{proof} Assume that $\mathrm{Per}(\Sigma_G)$ is a bounded subset of $G$. Since $G^0$ is $\varphi$-invariant, it follows that $G^0\subset \mathrm{Per}(\Sigma_G)$. Therefore, the fact that $G^0$ is a closed subgroup of $G$ together with the previous inclusion implies the compactness of $G^0$.
	
Reciprocally, assume that $G^0$ is a compact subgroup and consider first the case where $G^{+, -}$ is a subgroup of $G$. Since the compactness of $G^0$ implies the decomposability of $G$, Lemma \ref{conj} shows that the map
$$\psi: \;\;(g, x)\in G^0\times_{\Ad}\fg^{+, -}\mapsto\exp(X) g\in G,$$
is an isomorphism. Therefore, the maps $\psi$ is a conjugation between an induced linear control system $\Sigma_{G^0\times_{\Ad}\fg^{+, -}}$ and $\Sigma_G$.

Moreover, for any $(g, X)\in G^0\times_{\Ad}\fg^{+, -}$ we obtain
$$\psi\left(\varphi_t|_{G^0}(g), \rme^{t\DC|_{\fg^{+, -}}}X\right)=\psi\left(\varphi_t(g), \rme^{t\DC} X\right)=\exp(\rme^{t\DC}X)\varphi_t(g)=\varphi_t(\exp(X))\varphi_t(g)=\varphi_t(\exp(X)g)=\varphi_t(\psi(g, X)),$$
and by differentiation, we get $\psi_*\circ(\XC_{|G^0}\times\DC|_{\fg^{+, -}})=\XC\circ\psi$. Thus, the linear vector field associated to $\Sigma_{G^0\times_{\Ad}\fg^{+, -}}$ is given by $\XC_{|G^0}\times\DC|_{\fg^{+, -}}$. In particular, the linear system $\Sigma_{G^0\times_{\Ad}\fg^{+, -}}$ is of the form (\ref{semi}). The group $G^0$ is compact and $\DC|_{\fg^{+, -}}$ has only eigenvalues with nonzero real part, thus Lemma \ref{central} implies that 
$$\mathrm{Per}\left(\Sigma_{G^0\times_{\Ad}\fg^{+, -}}\right)\;\;\;\mbox{ is a bounded subset}.$$
Since $\psi$ is an isomorphism, Proposition \ref{control} shows that
$$\psi\left(\mathrm{Per}\left(\Sigma_{G^0\times_{\Ad}\fg^{+, -}}\right)\right)=\psi\left(\mathrm{Per}\left(G^0\times\{0\}, \Sigma_{G^0\times_{\Ad}\fg^{+, -}}\right)\right)=\mathrm{Per}\left(\psi(G^0\times\{0\}), \Sigma_G\right)=\mathrm{Per}\left(G^0, \Sigma_G\right)=\mathrm{Per}\left(\Sigma_G\right)$$
and hence $\mathrm{Per}\left(\Sigma_G\right)$ is a bounded subset of $G$, when $G^{+, -}$ is a subgroup.

To prove the general statement, let us consider as previously the compact, connected normal subgroup of $G$ given by $N^0=N\cap G^0$. We know that $\widehat{G}=N^0\setminus G$ is a Lie group and the induced system $\Sigma_{\widehat{G}}$ is linear. Moreover, by \cite[Lemma 2.3]{DS} it holds
$$\widehat{G}^+=\pi(G^+), \;\;\;\widehat{G}^0=\pi(G^0)\;\;\;\mbox{ and }\;\;\;\widehat{G}^-=\pi(G^-),$$
where $\pi:G\rightarrow \widehat{G}$ is the canonical projection. Therefore, the equalities $\pi^{-1}(\pi(G^0))=N^0G^0=G^0$ and Proposition \ref{control}, implies 
	$$\mathrm{Per}(\Sigma_G)\;\;\;\mbox{is a bounded set }\;\;\;\iff\;\;\;\mathrm{Per}\bigl(\Sigma_{\widehat{G}}\bigr)\;\;\;\mbox{is a bounded set},$$
	and our proof is reduced to show the same result for the projected linear system $\Sigma_{\widehat{G}}$. However, by Lemma \ref{lemma1} we obtain $N=N^0G^{+, -}$ and so 
$$\widehat{G}^{+, -}=\pi(G^{+, -})=\pi(N^0G^{+, -})=\pi(N).$$
Therefore, $\widehat{G}^{+, -}$ is a subgroup of $\widehat{G}$. By the first case, we get that $\mathrm{Per}(\Sigma_{\widehat{G}})$ is a bounded set and the result follows.  
\end{proof}

\subsection{Control sets}

Roughly speaking, a control set is a maximal region on which the system in controllable. Such concept appears mainly due to the lack of controllability of many systems. For linear system they have been studied in \cite{DSAyGZPH, DSAy2, DSAyGZ}. In this section we formally define control sets and show that, under some conditions, the interior of such a set coincides with the set of the central periodic points. In particular, that gives us a necessary and sufficient condition to assure boundedness of the control set.

\bigskip

For $x\in G$ the {\it set of reachable points from $x$} is by definition 
$$\OC^+(x):=\{\phi(t, x, u), \;t\geq0, u\in\UC\}.$$
Following \cite[Definition 3.1.2]{FCWK} a nonempty set $\CC\subset G$ is said to be a {\it control set }of $\Sigma_G$ if it is maximal (w.r.t. set inclusion) satisfying
\begin{itemize}
	\item[(i)] For any $g\in\CC$ there exists $u\in\UC$ such that $\phi(\R^+, g, u)\subset\CC$;
	\item[(ii)] For any $x\in\CC$ it holds that $\CC\subset \cl(\OC^+(x))$.
\end{itemize}

In \cite{DSAyGZ} the authors studied control sets of linear systems on Lie groups with finite semisimple center, that is, connected Lie groups $G$ such that the associated semisimple Lie group $G/R$ has finite center, where $R$ is the solvable radical of $G$\footnote{The definition of a Lie group with finite semisimple center in \cite[Definition 3.1]{DSAyGZ}, althought different, is trivially equivalent to the previous one by Malcev's Theorem (see \cite[Theorem 4.3]{ALEB}).}. This condition is not restrictive since it covers solvable and reductive Lie groups, semisimple Lie groups with finite center and any semidirect product among the refereed classes. Moreover, by Proposition 3.3 of \cite{DSAyGZ}, if the central subgroup $G^0$ associated with a linear vector field is compact, then $G/R=\pi(G^0)$ is a compact semisimple Lie group and hence has finite center, where $\pi=G\rightarrow G/R$ is the canonical projection. 

Assume then that $G$ has finite semisimple center and let $\Sigma_G$ a linear system on $G$. If $\OC^+(e)$ is an open subset, there exists a control set $\CC$ of $\Sigma_G$ such that $e\in\inner\CC$ (see \cite[Proposition 4.5.11]{FCWK}. Reciprocally, if $\Sigma_G$ admits a control set $\CC$ such that $e\in\inner\CC$, then $\inner\CC\subset\OC^+(e)$, implying that $\OC^+(e)$ is open (see \cite[Lemma 3.2.13]{FCWK}). Furthermore, if such a control set $\CC$ exists, we have by \cite[Theorem 3.8]{DSAy1} that 
$$G^0\subset \OC^+(e)\cap\OC^-(e)\subset\inner\CC,$$
where $\OC^-(x)$ is the set of reachable points from $x$ associated with the reversed-time system. Also, if $G$ is decomposable, $\CC$ is the only control set with nonempty interior of $\Sigma_G$ (see \cite[Theorem 3.11]{DSAyGZ}).

\begin{remark}
	It is important to remark that the condition \cite[Theorem 3.8]{DSAy1}  is that $e\in\inner\OC^{+}_{\tau_0}(e)$ for some $\tau_0>0$. However, in the restricted case it is equivalent to the openness of $\OC^+(e)$ (see \cite[Lemma 4.5.2]{FCWK}).
\end{remark}

We can now prove our main result relating control sets and central periodic points.

\begin{theorem}
	\label{maincontrol}
	Let $G$ be a connected Lie group with finite semisimple center and $\Sigma_G$ a linear system. If $\Sigma_G$ admits a control set $\CC$ such that $e\in\inner\CC$, then 
	$$\inner\CC=\mathrm{Per}(e, \Sigma_G)=\mathrm{Per}(\Sigma_G).$$
	In particular, $\CC$ is a bounded control set if and only if $G^0$ is a compact subgroup.
\end{theorem}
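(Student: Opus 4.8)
The plan is to establish the chain of equalities $\inner\CC=\mathrm{Per}(e,\Sigma_G)=\mathrm{Per}(\Sigma_G)$ and then derive the boundedness dichotomy from Theorem \ref{teo}. I would prove the two central-equalities first, since the final statement about boundedness follows almost immediately once they are in hand.

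For the equality $\mathrm{Per}(e,\Sigma_G)=\mathrm{Per}(\Sigma_G)$, the inclusion $\mathrm{Per}(e,\Sigma_G)\subset\mathrm{Per}(\Sigma_G)$ is trivial, since $e\in G^0$ (the identity lies in every connected subgroup), so any orbit starting and ending at $e$ in particular starts and ends in $G^0$. For the reverse inclusion, I would take $g\in\mathrm{Per}(\Sigma_G)$, so there are $f_1,f_2\in G^0$ with $g$ reachable from $f_1$ and $f_2$ reachable from $g$. Here I would exploit that under the hypotheses $G^0\subset\inner\CC\subset\OC^+(e)$ and, by the reversed-time version, $G^0\subset\OC^-(e)$; concretely $f_1\in\OC^+(e)$ means $e$ reaches $f_1$, and $f_2\in\OC^-(e)$ means $f_2$ reaches $e$. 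Concatenating controls (using the concatenation device already exploited in Lemma \ref{equi}), I would build a single trajectory $e\rightsquigarrow f_1\rightsquigarrow g\rightsquigarrow f_2\rightsquigarrow e$, witnessing $g\in\mathrm{Per}(e,\Sigma_G)$.

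For the equality $\inner\CC=\mathrm{Per}(e,\Sigma_G)$ I would argue both inclusions. For $\mathrm{Per}(e,\Sigma_G)\subset\inner\CC$: a point $g\in\mathrm{Per}(e,\Sigma_G)$ lies on a trajectory from $e$ back to $e$, so $g\in\OC^+(e)$ and $e\in\OC^+(g)$; since $e\in\inner\CC$ and $\OC^+(e)$ is open, the point $g$ is approximately controllable to and from $e$, and maximality of the control set forces $g$ into $\CC$, in fact into $\inner\CC$ because $g\in\OC^+(e)=\inner\OC^+(e)$ is an interior reachable point. For the reverse inclusion $\inner\CC\subset\mathrm{Per}(e,\Sigma_G)$: given $g\in\inner\CC$, the control-set axioms give $g\in\cl(\OC^+(e))$ and $e\in\cl(\OC^+(g))$ (using $e\in\inner\CC$), and because $g$ and $e$ both lie in the \emph{interior} the closures can be upgraded to exact reachability by the openness of $\OC^{\pm}(e)$, yielding trajectories $e\rightsquigarrow g$ and $g\rightsquigarrow e$, hence $g\in\mathrm{Per}(e,\Sigma_G)$.

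The main obstacle will be the careful passage between approximate controllability (the closures $\cl(\OC^+(x))$ appearing in the control-set definition) and exact reachability (the literal trajectory identities $\phi(\tau,f,u)=g$ demanded by the definition of $F$-periodic point). The bridge is precisely the openness of $\OC^+(e)$ and the interior hypothesis $e\in\inner\CC$, together with the cited facts $\inner\CC\subset\OC^+(e)$ and $G^0\subset\OC^+(e)\cap\OC^-(e)\subset\inner\CC$; I would lean on \cite[Lemma 3.2.13]{FCWK} and the local accessibility results there to convert interior membership into genuine finite-time reachability. Once the set equalities are secured, the final clause is immediate: $\inner\CC=\mathrm{Per}(\Sigma_G)$ is bounded if and only if $\CC$ is bounded (their closures coincide, and $\inner\CC$ is nonempty), and by Theorem \ref{teo} boundedness of $\mathrm{Per}(\Sigma_G)$ is equivalent to compactness of $G^0$.
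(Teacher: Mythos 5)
Your proposal has the same skeleton as the paper's proof: establish $\inner\CC=\mathrm{Per}(e, \Sigma_G)=\mathrm{Per}(\Sigma_G)$ first, then get the boundedness dichotomy from Theorem \ref{teo} plus the density of $\inner\CC$ in $\CC$. The auxiliary facts you lean on differ slightly: the paper obtains $\mathrm{Per}(\Sigma_G)\subset\inner\CC$ from the no-return property of control sets with nonempty interior (\cite[Corollary 1.1]{Ka1}) applied to $G^0\subset\inner\CC$, and $\inner\CC\subset\mathrm{Per}(e, \Sigma_G)$ from exact controllability inside $\inner\CC$ (\cite[Theorem 2.4]{DSAyGZ}), whereas you work directly with the exact reachable sets, concatenating trajectories through $G^0\subset\OC^+(e)\cap\OC^-(e)$ to prove $\mathrm{Per}(\Sigma_G)\subset\mathrm{Per}(e,\Sigma_G)$; that concatenation shortcut is legitimate and arguably cleaner than the paper's detour through $\inner\CC$. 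Your closure-to-exact upgrade for $\inner\CC\subset\mathrm{Per}(e,\Sigma_G)$ also works, though note that the cited inclusion $\inner\CC\subset\OC^+(e)$ already gives exact reachability $e\rightsquigarrow g$ with no upgrade needed, and that openness of $\OC^-(e)$ is not free: it requires running \cite[Lemma 3.2.13]{FCWK} for the time-reversed system.

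There is, however, one inference that fails as stated: you place $g\in\mathrm{Per}(e,\Sigma_G)$ into $\inner\CC$ ``because $g\in\OC^+(e)=\inner\OC^+(e)$ is an interior reachable point.'' Openness of $\OC^+(e)$ alone cannot do this, because $\OC^+(e)\not\subset\CC$ in general --- in the paper's Example \ref{R^2} the reachable set from the origin is unbounded in the $x$-direction while $\CC=(-1,1)\times[-1,1]$ --- so an open neighborhood of $g$ inside $\OC^+(e)$ need not lie in $\CC$, and maximality only yields $g\in\CC$, not $g\in\inner\CC$. The repair is available in the very chain you quote: a point of $\mathrm{Per}(e,\Sigma_G)$ lies in $\OC^+(e)\cap\OC^-(e)$, and the paper's discussion preceding the theorem records, via \cite[Theorem 3.8]{DSAy1}, precisely that $\OC^+(e)\cap\OC^-(e)\subset\inner\CC$; invoking that two-sided inclusion (rather than one-sided openness) closes the gap immediately, which is also in effect how the paper argues via the no-return property. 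With that single step corrected, your proof is complete and matches the paper's in structure and strength.
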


\begin{proof}
	It certainly holds that $\mathrm{Per}(e, \Sigma_G)\subset\mathrm{Per}(G^0, \Sigma_G)=\mathrm{Per}(\Sigma_G)$. Furthermore, control sets with nonempty interior have the no-return property, that is, if $x\in\CC$ and $\phi(\tau, x, u)\in\CC$, for some $\tau>0$ and $u\in\UC$, then $\phi(t, x, u)\in \CC$ for any $t\in[0, \tau]$ (see \cite[Corollary 1.1]{Ka1}). Hence, since by our previous discussion $G^0\subset\inner\CC$, the no-return property implies that $\mathrm{Per}(\Sigma_G)\subset\inner\CC$. On the other hand, by \cite[Theorem 2.4]{DSAyGZ}, any two points in $\inner\CC$ can be joined by a trajectory of $\Sigma_G$ implying that $\inner\CC\subset \mathrm{Per}(e, \Sigma_G)$ and therefore	 $\inner\CC=\mathrm{Per}(e, \Sigma_G)=\mathrm{Per}(\Sigma_G)$. 
	
	By Theorem \ref{teo}, $G^0$ is a compact subgroup if and only if $\mathrm{Per}(\Sigma_G)$ is a bounded subset of $G$. Since $\inner\CC$ is dense in $\CC$, the second assertion follows.
\end{proof}

\begin{remark}
	Let us notice that by our previous discussion, $G^0$ compact implies that $G$ has finite semisimple center and hence the second assertions of Theorem \ref{maincontrol} holds without such assumption. 
\end{remark}

\subsection{Examples}

In this section we provide two examples. The first example on the Abelian group $\R^2$. The second one, on a connected three-dimensional nilpotent Lie group.
 
 \begin{example}
 	\label{R^2}
 	Let us consider a classical linear system $\Sigma_{\R^2}$ on $\R^2$ given in coordinates by 
 $$\left\{\begin{array}{l}
 \dot{x}(t)=x(t)+u(t)\\
 \dot{y}(t)=-y(t)+u(t)
 \end{array}\right.,$$
 where $u(t)\in [-1, 1]$. Following \cite[Example 3.2.27]{FCWK} the control set of $\Sigma_{\R^2}$ is given by 
 	$$\CC=(-1, 1)\times [-1, 1].$$
 	Since the derivation associated to the drift is $\DC=\left(\begin{array}{cc}
 	1 & 0\\ 0 & -1
 	\end{array}\right)$ we get $(\R^2)^0=\{0\}$ and hence $\mathrm{Per}(\Sigma_{\R^2})=(-1, 1)\times (-1, 1)$.
  \end{example}

\begin{example}
	Let us consider $\R^3$ endowed with the bracket obtained by the relations 
	$$[e_1, e_2]=e_3, \;\;\;[e_1, e_3]=[e_2, e_3]=0,$$
	where $\{e_1, e_2, e_3\}$ is the canonical basis of $\R^3$.
	The {\it Heisenberg group} $\mathbb{H}$ is the Lie group whose subjacent manifold is $\mathbb{R}^3$ and the product is given by 
$$v\cdot w=v+w+\frac{1}{2}[v, w].$$
Consider the central discrete subgroup $\{(0, 0, p), \;p\in\Z\}=\Z$ and the connected Lie group $G=\mathbb{H}/\Z\sim \R^2\times\R/\Z$. The maps
$$[v]\mapsto e_1+e_2+\frac{1}{2}[e_1+e_2, v]\;\;\;\mbox{ and } \;\;\;[v]\mapsto \DC v,$$
define a right-invariant vector field and a linear vector field on $G$, respectively. Here $\DC=\mathrm{diag}(1, -1, 0)$ and $[v]=v +\Z$. An easy computation shows that the linear system $\Sigma_{G}$ built up with these two dynamics reads in coordinates as follows
$$\left\{\begin{array}{l}
\dot{x}(t)=x(t)+u(t)\\
\dot{y}(t)=-y(t)+u(t)\\
\dot{z}(t)=\frac{u(t)}{2}(y(t)-x(t)),
\end{array}\right.$$
where we consider $u(t)\in [-1, 1]$. Note that $G^0=Z(G)=\R/\Z$ is a compact normal subgroup of $G$. Moreover, by the previous equations, it turns out that $\Sigma_G$ and the linear system $\Sigma_{\R^2}$ from Example \ref{R^2} are $\pi$-conjugated, where $\pi:G\rightarrow G/G^0$ is the canonical projection. Since $\ker\pi=G^0$, Proposition \ref{control} implies 
$$\pi(\mathrm{Per}(\Sigma_{G}))=\mathrm{Per}(\Sigma_{\R^2})\;\;\;\mbox{ and }\;\;\;\pi^{-1}\left(\mathrm{Per}(\Sigma_{\R^2})\right)=\mathrm{Per}(\Sigma_{G}).$$
Consequently, 
$$\mathrm{Per}(\Sigma_{G})=\left((-1, 1)^2\times\{0\}\right)+\R/\Z=(-1, 1)^2\times \R/\Z.$$
\end{example}

\end{document}